\newtheorem{prop}{Proposition}
\newtheorem{lemma}{Lemma}
\newtheorem{thm}{Theorem}
\newtheorem{cor}{Corollary}
\DeclareMathOperator{\ord}{ord}
\DeclareMathOperator{\Res}{Res}
\DeclareMathOperator{\PGL}{PGL}
\DeclareMathOperator{\BDV}{BDV}
\DeclareMathOperator{\CPA}{CPA}
\DeclareMathOperator{\supp}{supp}
\DeclareMathOperator{\Wr}{Wr}
\DeclareMathOperator{\Lip}{Lip}
\DeclareMathOperator{\diam}{diam}
\DeclareMathOperator{\hberk}{\mathrm{\bold H}^1}
\DeclareMathOperator{\aberk}{\mathrm{\bold A}^1}
\newcommand{\zetaG}{\zeta_{\text{G}}}
\DeclareMathOperator{\pberk}{\mathrm{\bold P}^1}
\newcommand{\Lips}{\mathcal{L}}
\DeclareMathOperator{\diamG}{\textrm{diam}_{\zetaG}}
\newcommand{\vv}{\vec{v}}
\newcommand{\vw}{\vec{w}}
\newcommand{\vz}{\vec{z}}
\newcommand{\PP}{\mathbb{P}}
\newcommand{\RR}{\mathbb{R}}
\newcommand{\CC}{\mathbb{C}}
\title{Lower Bounds for non-Archimedean Lyapunov Exponents}
\author{Kenneth Jacobs}
\subjclass[2010]{Primary  37P50, 11S82; 
Secondary  37P05} 
\keywords{Lyapunov Exponent, non-Archimedean, rational map, lower bound, distortion}
\begin{document}
\maketitle
\begin{abstract}
Let $K$ be a complete, algebraically closed, non-Archimedean valued field, and let $\pberk$ denote the Berkovich projective line over $K$. The Lyapunov exponent for a rational map $\phi\in K(z)$ of degree $d\geq 2$ measures the exponential rate of growth along a typical orbit of $\phi$. When $\phi$ is defined over $\CC$, the Lyapunov exponent is bounded below by $\frac{1}{2}\log d$. In this article, we give a lower bound for $L(\phi)$ for maps $\phi$ defined over non-Archimedean fields $K$. The bound depends only on the degree $d$ and the Lipschitz constant of $\phi$. For maps $\phi$ whose Julia sets satisfy a certain boundedness condition, we are able to remove the dependence on the Lipschitz constant.
\end{abstract}

\section{Introduction}
Let $K$ be an algebraically closed field that is complete with respect to a non-Archimedean absolute value $|\cdot|_v$. Let $\mathcal{O} = \{z\in K : |z|_v \leq 1\}$ be its ring of integers, $\mathfrak{m} = \{z\in K : |z|_v <1\}$ its maximal ideal, and $k = \mathcal{O}/\mathfrak{m}$ the residue field. Let $q_v = e$ if $\textrm{char}(k) = 0$; otherwise let $q_v$ be the characteristic of $k$. We will adopt the notation $\log_v x= \log_{q_v} x$.

The aim of this article is to give a lower bound for the Lyapunov exponent $L(\phi)$ of a rational map $\phi\in K(T)$ of degree $d\geq 2$. For a map $\phi\in \CC(z)$ defined over the complex numbers, the Lyapunov exponent can be given $$L(\phi)= \int_{\PP^1(\CC)} \log \phi^\#\ d\mu_\phi\ ,$$ where $\phi^\#$ is the derivative of $\phi$ in the chordal metric on $\PP^1(\CC)$ and $\mu_\phi$ is the measure of maximal entropy for $\phi$. A sharp lower bound for the Lyapunov exponent over $\CC$ is $ L(\phi)\geq \frac{1}{2} \log d (>0)$ (see \cite{Ly}, \cite{MSS}, \cite{Ru}). It is a striking feature of non-Archimedean dynamics, then, that when $\phi$ is defined over a non-Archimedean field, the Lyapunov exponent can be negative. This is the case, for example, when $\phi(T) = T^p$ for some prime number $p$ and $K=\CC_p$.\\

Let $\phi^\#$ denote the derivative of $\phi$ on $\mathbb{P}^1(K)$ with respect to the chordal metric. The space $\PP^1(K)$ is totally disconnected in its analytic topology, and so we embed it into the Berkovich projective line $\pberk$ over $K$. The Berkovich projective line is a compact, Hausdorff space which is uniquely path connected and which contains $\PP^1(K)$ as a dense subset. Formally, elements of $\pberk$ are (equivalence classes of) multiplicative seminorms on $K[X,Y]$ that extend the absolute value on $K$, which we denote by $[\cdot]_z$.

Extending $\phi^\#$ to $\pberk$ by continuity, the Lyapunov exponent of $\phi$ is defined to be $$L(\phi):= \int_{\pberk} \log_v [\phi^\#]_z d\mu_\phi(z)\ , $$ where $\mu_\phi$ is the unique $\phi$-invariant probability measure satisfying $\phi^*\mu_\phi = d\cdot \mu_\phi$. We remark that when $K$ is non-Archimedean, $\mu_\phi$ need not be the measure of maximal entropy; see \cite{FRLErgodic} Section 5.2.

Okuyama has given quantitative approximations to $L(\phi)$ in terms of the multipliers of the $n$-periodic points of $\phi$ \cite{YO2}. He also gives a qualitative critera for approximating $L(\phi)$ with more general measures in \cite{YO} Lemma 3.1. The author has also given an approximation of $L(\phi)$ in terms of Rumely's crucial measures $\{\nu_{\phi^n}\}$ (see \cite{KJ2} Corollary 1). \\

Our main theorem is the following lower bound for Lyapunov exponents of rational maps defined over non-Archimedean fields:

\begin{thm}\label{thm:lowerboundonLyap}
Let $K$ be a complete, algebraically closed non-Archimedean field with $\textrm{char}(K) = 0$. Let $\phi\in K(T)$ be a rational map of degree $d$, and for $\gamma\in \PGL_2(K)$ let $\phi^\gamma := \gamma^{-1}\circ\phi\circ \gamma$.

Let $\kappa = \min(\log_v | m|\ : \ 1\leq m \leq d)$, noting that $\kappa \leq 0$. Let $\mathcal{L}_\phi$ be the Lipschitz constant for the action of $\phi$ on $\mathbb{P}^1(K)$ in the spherical metric. Then $$L(\phi) \geq \kappa- (d+1) \inf_{\gamma\in \PGL_2(K)}\log_v \mathcal{L}_{\phi^\gamma}\ .$$
\end{thm}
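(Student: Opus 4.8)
The plan is to prove the seemingly sharper inequality $L(\phi)\ge\kappa-(d+1)\log_v\mathcal{L}_\phi$ for an \emph{arbitrary} rational map $\phi$ of degree $d$, and then to upgrade it via the conjugation--invariance of the Lyapunov exponent. That invariance, $L(\phi^\gamma)=L(\phi)$ for all $\gamma\in\PGL_2(K)$, follows by integrating $\log_v$ of the chordal chain rule $(\phi^\gamma)^\#=\bigl((\gamma^{-1})^\#\!\circ\phi\circ\gamma\bigr)\cdot(\phi^\#\!\circ\gamma)\cdot\gamma^\#$ against $\mu_{\phi^\gamma}=\gamma^*\mu_\phi$, changing variables by $\gamma$, and using the total invariance $\phi_*\mu_\phi=\mu_\phi$ together with $(\gamma^{-1})^\#\cdot(\gamma^\#\!\circ\gamma^{-1})\equiv1$ to cancel the two $\gamma$--terms. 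Granting the arbitrary--$\phi$ bound and applying it to $\phi^\gamma$ gives $L(\phi)=L(\phi^\gamma)\ge\kappa-(d+1)\log_v\mathcal{L}_{\phi^\gamma}$ for every $\gamma$, and taking the supremum over $\gamma$ of the right side yields the theorem.

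To prove the arbitrary--$\phi$ bound, fix a normalized homogeneous lift $\Phi=(F,G)$ with $F,G\in\mathcal{O}[X,Y]$ of degree $d$ and at least one unit coefficient; write $[\Phi]_{\hat z}:=\max([F]_{\hat z},[G]_{\hat z})$, where $\hat z$ denotes the representative seminorm of $z\in\pberk$ normalized so that $\max([X]_{\hat z},[Y]_{\hat z})=1$. Using Euler's identity one checks that $d$ divides every coefficient of $\Wr(\Phi):=\partial_XF\,\partial_YG-\partial_YF\,\partial_XG$, so $\Wr(\Phi)=d\cdot H$ with $H\in\mathcal{O}[X,Y]$ homogeneous of degree $2d-2$ vanishing precisely along $\Crit(\phi)$, and a standard computation of the chordal derivative gives $[\phi^\#]_z=[H]_{\hat z}\,[\Phi]_{\hat z}^{-2}$. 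Let $g_\Phi\le0$ be the escape--rate (Call--Silverman) function on $\pberk$, so that $\log_v[\Phi]_{\hat z}=d\,g_\Phi(z)-g_\Phi(\phi(z))$; integrating this against $\mu_\phi$ and using $\phi_*\mu_\phi=\mu_\phi$ gives $\int\log_v[\Phi]_{\hat z}\,d\mu_\phi=(d-1)\int g_\Phi\,d\mu_\phi\le0$, hence
\[
L(\phi)=\int_{\pberk}\log_v[H]_{\hat z}\,d\mu_\phi-2(d-1)\int_{\pberk}g_\Phi\,d\mu_\phi\ \ge\ \int_{\pberk}\log_v[H]_{\hat z}\,d\mu_\phi\ .
\]
It therefore suffices to bound the last integral below by $\kappa-(d+1)\log_v\mathcal{L}_\phi$.

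Factor $H=c\prod_{i=1}^{2d-2}\ell_i$ into normalized linear forms, $\ell_i$ vanishing at the critical point $c_i$, so $|c|=[H]_{\zetaG}$ and $[\ell_i]_{\hat z}=\hsia{z}{c_i}{\zetaG}$ is the extended spherical distance; then $\int\log_v[H]_{\hat z}\,d\mu_\phi=\log_v[H]_{\zetaG}+\sum_i\int\log_v\hsia{z}{c_i}{\zetaG}\,d\mu_\phi(z)$. For the content term I would show $\log_v[H]_{\zetaG}\ge\kappa$: the coefficients of $H$ are $\mathbb{Z}$--linear combinations, with integer weights of absolute value $<d$, of the coefficient--products of $F$ and $G$, which forces $[H]_{\zetaG}\ge\min_{1\le m\le d}|m|_v=q_v^{\kappa}$ in the inseparable--reduction case that produces the extremal examples such as $T^{p}$, while separable reduction even gives $[H]_{\zetaG}=1$. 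The crux is the estimate $\int\log_v\hsia{z}{c_i}{\zetaG}\,d\mu_\phi(z)\ge-c_d\log_v\mathcal{L}_\phi$, with $c_d$ small enough that the $2d-2$ critical terms assemble with the content term into the coefficient $d+1$. This I would get from a modulus--of--continuity bound for $g_\Phi$: writing $g_\Phi(z)=\sum_{n\ge0}d^{-(n+1)}\log_v[\Phi]_{\widehat{\phi^n(z)}}$, using that $z\mapsto\log_v[\Phi]_{\hat z}$ is Lipschitz for the spherical metric with constant controlled by $|\Res(\Phi)|^{-1}$ (hence by a power of $\mathcal{L}_\phi$), and using that each iterate of $\phi$ distorts spherical distances by at most $\mathcal{L}_\phi$, a term--by--term comparison shows $g_\Phi$ is H\"older for the spherical metric with exponent $\theta\asymp\log_v d/\log_v\mathcal{L}_\phi$. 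Since $\mu_\phi=\Delta g_\Phi+\delta_{\zetaG}$, this yields $\mu_\phi(\{z:\hsia{z}{c}{\zetaG}<r\})\le C_d\,r^{\theta}$, and the layer--cake formula converts it into the desired lower bound with $c_d$ a $d$--dependent multiple of $1/\theta$.

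The main obstacle is this last, measure--theoretic step: one needs a genuinely quantitative H\"older exponent for $g_\Phi$ (equivalently, a uniform decay rate for $\mu_\phi$ on small spherical balls) depending only on $d$ and $\mathcal{L}_\phi$, valid even when a critical point lies in the Julia set---where $g_\Phi\equiv0$, so the ``escape'' mechanism that regularizes $\mu_\phi$ away from recurrent critical behaviour is unavailable---and one must control the $d$--dependence of the constants tightly enough that the $2d-2$ critical contributions plus the content term collapse to exactly the coefficient $d+1$ rather than something larger. Obtaining this sharp linear dependence on $d$ is where I expect the real difficulty to lie; the coefficient--level verification that $\log_v[H]_{\zetaG}\ge\kappa$ in the inseparable case is a subsidiary technical point.
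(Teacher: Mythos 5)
Your overall strategy---prove $L(\phi)\ge\kappa-(d+1)\log_v\mathcal L_\phi$ for one representative, then take an infimum over conjugates using coordinate invariance---matches the paper. But the route you take to the one--conjugate bound is genuinely different from, and substantially weaker than, the paper's, and it has two real gaps.

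The paper never touches homogeneous lifts, the escape--rate function $g_\Phi$, or H\"older regularity of $\mu_\phi$. Instead it proves the pointwise distortion bound $\log_v[\phi']_\zeta\ge\kappa+\log_v\diam_\infty(\phi(\zeta))-\log_v\diam_\infty(\zeta)$ for $\zeta\in\hberk$ (via the surplus--multiplicity inequality of \cite{BIJL}), integrates it against the \emph{discrete} measures $\nu_n=\tfrac1{d^n}(\phi^n)^*[\zetaG]$ rather than against $\mu_\phi$ directly, and cancels the diameter cocycle using $\phi_*\nu_n=\nu_{n-1}$. The error $\int\log_v\diam_\infty\,d(\nu_{n-1}-\nu_n)$ is then controlled combinatorially (Proposition~\ref{prop:wildintegrallowerbound}): $\log_v\diam_\infty$ restricted to the finite tree $\Gamma_n$ spanned by $\phi^{-n}(\zetaG)\cup\phi^{-(n-1)}(\zetaG)$ has Laplacian given by the branching measure, whose total positive mass is at most half the number of endpoints, at most $d^n+d^{n-1}$, and this is exactly where $(d+1)\log_v\mathcal L_\phi$ comes from. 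Passing to the limit uses the logarithmic equidistribution (Proposition~\ref{prop:quantlogeq}/Corollary~\ref{cor:convergencebracket}), not a modulus of continuity for $g_\Phi$. The gain of the paper's route is precisely that it sidesteps the need for any quantitative H\"older/ball--measure estimate on $\mu_\phi$, which you correctly identify as the bottleneck in your own plan.

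Two concrete gaps in your proposal. First, the claimed content bound $\log_v[H]_{\zetaG}\ge\kappa$ is false. Take $p=5$, $d=2$, $F=X^2+Y^2$, $G=X^2+5^kXY+Y^2$. Then $\Wr(\Phi)=2\cdot5^k(X^2-Y^2)$, so $H=5^k(X^2-Y^2)$ and $[H]_{\zetaG}=5^{-k}$, which can be made arbitrarily small, while $\kappa=0$ since $\gcd(2,5)=1$. Your ``integer weights of absolute value $<d$'' heuristic doesn't rule out cancellation among the coefficient--products, and in bad--reduction situations (here $\tilde F=\tilde G$, so $\tilde\phi$ is degenerate) the content of $H$ has no lower bound independent of $\mathcal L_\phi$. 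Any repair would have to re--route the deficit in $[H]_{\zetaG}$ through the Lipschitz term, which your decomposition as written does not do. Second, the step you flag as ``the crux''---the uniform decay $\mu_\phi(\{\delta(\cdot,c)_{\zetaG}<r\})\le C_d\,r^\theta$ with constants tight enough that the $2d-2$ critical contributions plus the content term collapse to $(d+1)\log_v\mathcal L_\phi$---is genuinely not established and, as far as I can tell, would require constants considerably sharper than what the Favre--Rivera-Letelier H\"older estimate for $g_\Phi$ delivers; note also that the H\"older exponent must be $\min\bigl(1,\log_v d/\log_v(2\mathcal L_\phi)\bigr)$, not the unconstrained ratio, so in the regime $\mathcal L_\phi<d/2$ your heuristic $\theta\asymp\log_v d/\log_v\mathcal L_\phi$ breaks down and the constant absorbs the $\mathcal L_\phi$--dependence in a different way. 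As a self--contained proof this does not go through; the preliminary reductions (the chain--rule identity $[\phi^\#]_z=[H]_{\hat z}[\Phi]_{\hat z}^{-2}$, the escape--rate cocycle, conjugation invariance) are fine, but the two lower bounds that carry the entire content of the theorem are respectively false and unproved.
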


Note that, in the case that $\phi$ has potential good reduction, we can choose $\gamma\in \PGL_2(K)$ with $\mathcal{L}_{\phi^\gamma}=1$, so that we obtain the lower bound $L(\phi)\geq \kappa$. The map $\phi(z) = z^p$, where $p$ is the characteristic of the residue field, shows that this bound can be attained, however, it need not be sharp in all cases; for example, if $d>p$ is coprime to $p$ and $\phi(z) = z^d$, then again $\mathcal{L}_\phi = 1$, but we find $L(\phi) = 0 > \kappa$.

If we assume an integrability condition on the diameter of points in the support of $\mu_\phi$, we are able to remove the dependence on the Lipschitz constant:

\begin{thm}\label{thm:lowerboundonLyapbdd}
Let $K$ be a complete, algebraically closed non-Archimedean field, and suppose that $\textrm{char}(K)=0$. Let $\phi\in K(T)$ be a rational map of degree $d\geq 2$ such that $\log_v\diam_\infty(\cdot)\in L^1(\mu_\phi)$ and $\mathcal{J}(\phi)\subseteq \hberk$. Then $$L(\phi)\geq \kappa\ .$$
\end{thm}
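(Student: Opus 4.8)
The plan is to deduce Theorem~\ref{thm:lowerboundonLyapbdd} from Theorem~\ref{thm:lowerboundonLyap} by a limiting argument that exploits the extra integrability hypothesis to send the contribution of the Lipschitz constant to zero. The starting point is the observation that the quantity $\log_v \mathcal{L}_{\phi^\gamma}$, as $\gamma$ ranges over $\PGL_2(K)$, measures how badly $\phi$ distorts the spherical metric after a change of coordinates; conjugating by a map that ``zooms in'' near a point $x\in\pberk$ of small diameter should improve the local Lipschitz behavior there. Concretely, I would fix a type-II point (or a sequence of such points) and choose $\gamma$ to be an affine map $z\mapsto cz$ (or $z \mapsto c z + b$) whose associated scaling shrinks a ball of radius $r$ to the unit ball; one then needs a quantitative comparison showing that $\log_v \mathcal{L}_{\phi^\gamma}$ is controlled by the diameters $\diam_\infty$ of the relevant points and their images under $\phi$. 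The hypothesis $\mathcal{J}(\phi)\subseteq \hberk$ guarantees that $\mu_\phi$ is supported away from the ``bad'' classical points where no amount of rescaling helps, and the hypothesis $\log_v\diam_\infty(\cdot)\in L^1(\mu_\phi)$ ensures that when we integrate these local corrections against $\mu_\phi$ the result is finite and can be made arbitrarily small.

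More precisely, here are the steps I would carry out. First, I would recall or establish a local distortion estimate: for $x\in\hberk$ and its image $\phi(x)$, the ``local Lipschitz constant'' of $\phi$ at $x$ in the spherical metric is comparable (up to the degree $d$ and the constant $\kappa$) to $\diam_\infty(\phi(x))/\diam_\infty(x)$ together with the multiplicity of $\phi$ at $x$; this is essentially the non-Archimedean chain rule / the behavior of $\phi$ on annuli, and should follow from the theory of the reduction map and Rivera-Letelier's results on the action of $\phi$ on $\pberk$. Second, using $\phi_*\mu_\phi$-invariance (more precisely $\phi^*\mu_\phi = d\,\mu_\phi$, hence $\phi_*\mu_\phi = \mu_\phi$), I would show $\int \log_v \diam_\infty(\phi(x))\, d\mu_\phi(x) = \int \log_v\diam_\infty(x)\, d\mu_\phi(x)$, so the diameter contributions telescope away. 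Third, I would rewrite $L(\phi) = \int \log_v[\phi^\#]_z\, d\mu_\phi$ directly, bounding the integrand pointwise below using the local distortion estimate from Step~1 — at a point of multiplicity $m$ the derivative picks up a factor whose $\log_v$ is at least $\log_v|m| \geq \kappa$ — and then integrating, using Step~2 to cancel the diameter terms. This yields $L(\phi) \geq \kappa$ directly, without ever invoking the conjugation.

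An alternative (and perhaps cleaner) route is to stay with Theorem~\ref{thm:lowerboundonLyap} verbatim and show that the hypotheses force $\inf_{\gamma}\log_v\mathcal{L}_{\phi^\gamma}$ to contribute nothing: one constructs a sequence $\gamma_n\in\PGL_2(K)$ adapted to an exhaustion of $\supp(\mu_\phi)$ by finite unions of small balls, so that $\log_v\mathcal{L}_{\phi^{\gamma_n}}$ is bounded by the maximal $\log_v$-diameter distortion over that finite cover, and then argues that because $\supp(\mu_\phi)\subseteq\hberk$ and $\log_v\diam_\infty \in L^1(\mu_\phi)$, this maximum — suitably interpreted — does not blow up. However, this requires some care since $\mathcal{L}_{\phi^\gamma}$ is a global quantity and a single $\gamma$ cannot simultaneously optimize at every point of the Julia set; it may be necessary to localize the argument on each ball of a partition and reassemble, or to re-run the proof of Theorem~\ref{thm:lowerboundonLyap} keeping the Lipschitz term inside the integral rather than pulling it out as a supremum.

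I expect the main obstacle to be precisely this globalization issue: Theorem~\ref{thm:lowerboundonLyap}'s proof presumably estimates $[\phi^\#]_z$ below by something like $\mathcal{L}_{\phi^\gamma}^{-(d+1)}$ times a combinatorial factor, using a \emph{single} $\gamma$ uniformly across $\pberk$. To get the sharper bound one must instead produce the estimate $\log_v[\phi^\#]_z \geq \kappa + \log_v\diam_\infty(\phi(z)) - \log_v\diam_\infty(z)$ (or something of this shape) \emph{pointwise} on $\supp(\mu_\phi)\cap\hberk$, which demands understanding the metric distortion of $\phi$ at each individual point of type II (and III), not merely in the worst case. The condition $\mathcal{J}(\phi)\subseteq\hberk$ is what makes this feasible — at classical points the diameter is zero and $\log_v\diam_\infty$ is $-\infty$, so the pointwise estimate degenerates — while $\log_v\diam_\infty\in L^1(\mu_\phi)$ is exactly what is needed to integrate the pointwise bound and have the diameter terms cancel against each other by invariance of $\mu_\phi$. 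Verifying the pointwise distortion inequality, with the correct normalization so that the multiplicity contributes $\log_v|m|\geq\kappa$ and nothing worse, is where the real work lies.
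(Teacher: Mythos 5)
Your second paragraph lands on the paper's actual argument: establish the pointwise inequality $\log_v[\phi']_\zeta \geq \kappa + \log_v\diam_\infty(\phi(\zeta)) - \log_v\diam_\infty(\zeta)$ for $\zeta\in\hberk$ (this is Lemma~\ref{lem:hatdeltabound}, obtained from the lower bound on the distortion $\hat{\delta}(\phi,\zeta)$ of \cite{BIJL}), then integrate against $\mu_\phi$; the hypothesis $\mathcal{J}(\phi)\subseteq\hberk$ guarantees the inequality is valid $\mu_\phi$-a.e., the hypothesis $\log_v\diam_\infty(\cdot)\in L^1(\mu_\phi)$ guarantees the integrals are finite, and $\phi$-invariance of $\mu_\phi$ makes the two diameter integrals cancel. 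That is exactly the proof in Section~\ref{sect:pfthmtwo}.

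Two corrections to the framing, however. First, your opening paragraph and your ``alternative route'' propose deducing Theorem~\ref{thm:lowerboundonLyapbdd} from Theorem~\ref{thm:lowerboundonLyap} by driving the Lipschitz contribution $\inf_\gamma\log_v\mathcal{L}_{\phi^\gamma}$ to zero through a clever choice of coordinates. As you yourself begin to suspect in your last paragraph, that cannot work: $\mathcal{L}_{\phi^\gamma}$ is a \emph{global} quantity, a single conjugation cannot optimize simultaneously over the whole Julia set, and $\inf_\gamma\log_v\mathcal{L}_{\phi^\gamma}>0$ whenever $\phi$ does not have potential good reduction, regardless of the integrability of $\diam_\infty$. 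The logical dependence actually runs the other way: both theorems rest on the same pointwise inequality (Lemma~\ref{lem:hatdeltabound}), and the Lipschitz error in Theorem~\ref{thm:lowerboundonLyap} is the price paid when one \emph{cannot} integrate that inequality directly against $\mu_\phi$ and must instead pass through the approximating measures $\nu_n$ and Proposition~\ref{prop:wildintegrallowerbound}. Second, you write the pointwise bound and the Lyapunov integral in terms of $[\phi^\#]_z$, but the distortion machinery produces a bound on $\log_v[\phi']_\zeta$, not $\log_v[\phi^\#]_\zeta$; one also needs $L_v(\phi)=\hat{L}_v(\phi)=\int\log_v[\phi']\,d\mu_\phi$, which is Proposition~\ref{prop:equalLyapunovs}. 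Without that bridge your chain of inequalities does not close.
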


The function $\log_v \diam_\infty(\cdot)$ will be defined in Section~\ref{sect:metrics} below. The set $\mathcal{J}(\phi)$ is the Julia set of $\phi$, which is the support of the measure $\mu_\phi$. The set $\hberk$ is the Berkovich hyperbolic line, which is the complement $\pberk \setminus \PP^1(K)$ (see Section~\ref{sect:Berk} below).

As an example, any flexible Latt\`es map satisfies the hypotheses of Theorem~\ref{thm:lowerboundonLyapbdd} (the Julia set for such a map is a segment in $\hberk$; see \cite{FRLErgodic} Section 5.1), as will any map with potential good reduction. We will show in Proposition~\ref{prop:goodredlyapzero} below that if the reduction of $\phi$ at the point $\zeta$ is also separable, then $L(\phi)=0$.\\

In the process of proving Theorem~\ref{thm:lowerboundonLyap}, we needed to strengthen the equidistribution of preimages of point masses $[\zeta]$, in the case that $\zeta\in \hberk$, to include test functions which have a logarithmic singularity at a point in $\PP^1(K)$ (see \cite{BRsmht}, \cite{CL}, \cite{FRL} for equidistribution using continuous test functions).  We show:

\begin{thm}\label{thm:logeq}
Let $K$ be a complete, algebraically closed non-Archimedean valued field, and let $\phi \in K(z)$ have degree $d\geq 2$. Fix a point $\xi\in \hberk$, and let $\nu_n = \frac{1}{d^n} (\phi^{n})^* [\xi]$, where $[\xi]$ is the Dirac mass at $\xi$. Fix a point $a\in \pberk$. We have $$ \int \log_v \delta(z, a)_{\zetaG} d\nu_n \to \int \log_v \delta(z, a)_{\zetaG} d \mu_\phi\ .$$
\end{thm}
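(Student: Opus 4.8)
The standard equidistribution theorem (Baker--Rumely, Chambert-Loir, Favre--Rivera-Letelier) asserts weak-* convergence $\nu_n \to \mu_\phi$ against continuous test functions on $\pberk$. The function $z \mapsto \log_v \delta(z,a)_{\zetaG}$ is continuous on $\pberk \setminus \{a\}$ but, when $a \in \PP^1(K)$, has a logarithmic pole at $a$: indeed $\log_v\delta(z,a)_{\zetaG} = -g_{\zetaG}(z,a) + C$ type behavior, so it is $-\infty$ at $a$. When $a \in \hberk$, the function is bounded and continuous on all of $\pberk$, so the statement is immediate from classical equidistribution; thus the whole content is the case $a \in \PP^1(K)$. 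The plan is to combine classical equidistribution with a quantitative control of the mass that $\nu_n$ places near $a$, using the hypothesis $\xi \in \hberk$ crucially.

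So my first step is to reduce to $a\in\PP^1(K)$ and to rewrite $\log_v\delta(z,a)_{\zetaG}$ as a difference of a continuous function and a localized singular piece: fix a small parameter and a "bump" radius, and write $\log_v\delta(z,a)_{\zetaG} = f_\epsilon(z) + h_\epsilon(z)$, where $f_\epsilon$ is continuous on $\pberk$ (a truncation of the potential, agreeing with $\log_v\delta(\cdot,a)_{\zetaG}$ outside a ball $B(a,\epsilon)$ and capped inside) and $h_\epsilon$ is supported on $B(a,\epsilon)$, nonpositive, with $h_\epsilon = \log_v\delta(z,a)_{\zetaG} - (\text{const})$ there. Classical equidistribution handles $\int f_\epsilon\,d\nu_n \to \int f_\epsilon\,d\mu_\phi$ for each fixed $\epsilon$. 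For the singular piece, since $\mu_\phi$ does not charge points of $\PP^1(K)$ and $\log_v\delta(\cdot,a)_{\zetaG}$ will be assumed $\mu_\phi$-integrable (it is, as it differs by a bounded amount from a pullback of the Arakelov-Green's function, which is $\mu_\phi$-integrable), we get $\int h_\epsilon\,d\mu_\phi \to 0$ as $\epsilon \to 0$. The crux is then a uniform-in-$n$ bound $\bigl|\int h_\epsilon\,d\nu_n\bigr| \le \omega(\epsilon)$ with $\omega(\epsilon)\to 0$.

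To get that uniform bound I would estimate $\int_{B(a,\epsilon)} \bigl(-\log_v\delta(z,a)_{\zetaG}\bigr)\,d\nu_n(z)$ by a dyadic decomposition of the annuli $B(a,2^{-k})\setminus B(a,2^{-k-1})$ inside $B(a,\epsilon)$, so it suffices to control $\nu_n\bigl(B(a,r)\bigr)$ for small $r$ uniformly in $n$. Here is where $\xi\in\hberk$ enters: $\nu_n = d^{-n}(\phi^n)^*[\xi]$ is an average of $d^n$ point masses at the preimages $(\phi^n)^{-1}(\xi)$, and because $\xi$ is of positive diameter (a type II or III point, or a limit), each such preimage is again a point of $\hberk$, \emph{not} a point of $\PP^1(K)$; moreover the local degree / multiplicity structure of $\phi^n$ lets one bound how many preimages can fall into a ball of radius $r$ around $a\in\PP^1(K)$. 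Concretely I expect a bound of the shape $\nu_n(B(a,r)) \le C\cdot(\text{something like } \log(1/r)^{-1})$ or a Hölder-type bound $\nu_n(B(a,r)) \le C r^\alpha$, coming either from (i) an a priori modulus-of-continuity estimate for the potentials $p_{\nu_n}(z,a) = \int \log_v\delta(z,w)_{\zetaG}\,d\nu_n(w)$ together with the fact that $p_{\nu_n}$ solves $\Delta p_{\nu_n} = \nu_n - d^{-n}(\phi^n)^*[\text{stuff}]$, or (ii) directly from the non-Archimedean Koebe/distortion estimates governing how $\phi^n$ expands the ball $B(\xi',\text{diam})$ around a preimage $\xi'$ of $\xi$ down to a definite-size neighborhood, forcing the preimages to be $r$-separated once $r$ is small relative to $\diam(\xi)$. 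Either route yields $\int h_\epsilon\,d\nu_n \to 0$ uniformly in $n$, and then the three-epsilon argument closes: $\limsup_n |\int \log_v\delta\,d\nu_n - \int\log_v\delta\,d\mu_\phi| \le \omega(\epsilon) + |\int h_\epsilon\,d\mu_\phi| \to 0$.

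The main obstacle I anticipate is precisely the uniform-in-$n$ estimate on $\nu_n(B(a,r))$; the classical (Archimedean) analogue uses potential-theoretic regularity of the Green's function and a Hölder estimate, but in the Berkovich setting one must instead exploit the combinatorics of preimages and the fact that $\xi\in\hberk$ keeps all iterated preimages away from the "bad" locus $\PP^1(K)$, so that no single atom of $\nu_n$ sits at $a$ and the atoms cannot cluster too fast. Making this quantitative — ideally extracting an explicit modulus $\omega(\epsilon)$ independent of $n$ and of $\phi$'s reduction type — is the technical heart of the argument; everything else is bookkeeping with the truncation $f_\epsilon$ and the classical weak-* convergence.
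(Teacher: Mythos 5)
Your plan correctly reduces to $a\in\PP^1(K)$ and correctly isolates the uniform-in-$n$ control of $\nu_n$ near $a$ as the hard step, but that step is a genuine gap, and it is exactly the step the paper's proof bypasses. The paper does not truncate the test function and estimate tail mass; instead it uses self-adjointness of the Laplacian. Writing $\nu_n-\mu_\phi=\Delta\bigl(g_n(\cdot,\xi)-g_\infty(\cdot,\xi)\bigr)$ with $g_n-g_\infty$ continuous and uniformly convergent, and replacing $a$ by the nearby point $a^\epsilon$ on $[a,\zetaG]$ with $\diam_{\zetaG}(a^\epsilon)=\epsilon$, one has $\Delta\log_v\delta(\cdot,a^\epsilon)_{\zetaG}=[a^\epsilon]-[\zetaG]$. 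Integration by parts therefore collapses the whole integral to the two point evaluations
\[
\bigl|g_n(a^\epsilon,\xi)-g_\infty(a^\epsilon,\xi)\bigr|+\bigl|g_n(\zetaG,\xi)-g_\infty(\zetaG,\xi)\bigr|\ \leq\ \frac{2d\,\sup|g_1(\cdot,\xi)|}{(d-1)\,d^n}\ ,
\]
a bound that is \emph{independent of $a^\epsilon$}; one then lets $\epsilon\to 0$ using continuity of the potentials. No mass estimate $\nu_n(B(a,r))\le\omega(r)$ is ever needed.

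The gap in your route is that such a uniform modulus $\omega(r)$ does not follow from what you have in hand. Uniform boundedness of the potentials $g_n-g_\infty$ (which is true here) does \emph{not} by itself force $\nu_n(B(a,r))\to 0$ as $r\to 0$ uniformly in $n$: a single type II point mass has a bounded potential yet places mass $1$ on any ball containing its disc. Moreover, since $\rho(\zeta,\xi)$ for $\zeta\in\phi^{-n}(\xi)$ can grow linearly in $n$, the atoms of $\nu_n$ do approach $\PP^1(K)$, so there is no a priori lower bound on their diameter. To get a genuine vanishing modulus you would need, e.g., H\"older continuity of the potentials in the small metric (à la Favre–Rivera-Letelier Proposition~3.3), which re-introduces the Lipschitz constant of $\phi$ and a Hölder exponent, followed by a telescoping integration along $[a,a^\epsilon]$. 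That can be made to work, but it is substantially more machinery and, crucially, the resulting modulus depends on $a$ and on $\mathcal{L}_\phi$, whereas the paper's bound is uniform in $a$ for free. In short: your decomposition and the role of $\xi\in\hberk$ are identified correctly, but the step you flagged as the "technical heart" is in fact unresolved as stated, and the Laplacian-symmetry argument is the idea you are missing.
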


Here, the pullback operator $\phi^*$ is defined in terms of multiplicity functions on $\pberk$; see Section~\ref{sect:pullbacks} below. The point $\zetaG\in \pberk$ is a distinguished point called the Gauss point (see Section~\ref{sect:Berk}). The Hsia kernel plays a fundamental role in potential theory on $\pberk$, and will be discussed in Section~\ref{sect:Berkpotential} below.

A quantitative version of Theorem~\ref{thm:logeq} is given in Proposition~\ref{prop:quantlogeq} below; it is worth noting that the error term is \emph{independent of the point $a$}, and depends only on the map $\phi$ and the point $\xi\in \hberk$.

\subsection{Outline}
In Section~\ref{sect:BGN}, we establish background on Berkovich space and dynamics on Berkovich space. Following this, in Section~\ref{sect:propLyap} we establish some basic properties of Lyapunov exponents, including the coordinate invariance of $L(\phi)$ and a formulation of $L(\phi)$ in terms of the standard derivative $\phi'$. In Section~\ref{sect:mainproof} we recall a notion of distortion introduced in \cite{BIJL} and use it to establish the lower bound $$\log_v [\phi']_\zeta \geq \kappa + \log_v\diam_\infty(\phi(\zeta))-\log_v\diam_\infty(\zeta)\ ,$$ which we note is only valid for $\zeta\in \hberk$. The idea of the proof of the main theorems is to integrate both sides of this inequality against $\mu_\phi$; this can be done directly under the hypotheses of Theorem~\ref{thm:lowerboundonLyapbdd}, but not in general. For the general case, we rely on some equidistribution-type results for the measures $\frac{1}{d^n}(\phi^{n})^*[\zetaG]$; these are given in Section~\ref{sect:eq}. Finally, we establish Theorem~\ref{thm:lowerboundonLyap} in Section~\ref{sect:generalcase}. 

\subsection{Acknowledgements}
The author would like to thank Robert Rumely, Rob Benedetto, Y\^{u}suke Okuyama, and Laura DeMarco for their assistance and helpful conversations, as well as anonymous referees of earlier versions of this paper whose feedback helped clarify the exposition and simplify several proofs. The author gratefully acknowledges support from the NSF grant DMS-1344994 of the RTG in Algebra, Algebraic Geometry, and Number Theory, at the University of Georgia.

\section{Background and Notation}\label{sect:BGN}

\subsection{Spherical Metric and Spherical Derivatives}

The projective line $\PP^1(K)$ over $K$ is the quotient of $K^2\setminus\{(0,0)\}$ by the scaling action of $K^\times$. Let $\vz=(z_1, z_2), \vw=(w_1, w_2)\in K^2\setminus \{(0,0)\}$; the spherical distance between $\vz$ and $\vw$ is $$||\vz, \vw|| = \frac{|z_1w_2-z_2w_1|_v}{\max(|z_1|_v, |z_2|_v)\cdot \max(|w_1|_v, |w_2|_v)}\ .$$ This metric is scaling invariant, and hence descends to a well-defined function on $\PP^1(K)$. For points $\tilde{z}, \tilde{w}\in K$, we further have $$||\tilde{z},\tilde{w}|| = \frac{|\tilde{z}-\tilde{w}|_v}{\max(1, |\tilde{z}|_v)\cdot\max(1, |\tilde{w}|_v)}\ , $$ where $\tilde{z}, \tilde{w}$ are projections of $\vz, \vw$ (resp.). 

Let $\phi\in K(z)$ be a rational map (of any degree), viewed as an endomorphism of $\PP^1(K)$. The spherical derivative of $\phi$ is the derivative of $\phi$ with respect to the spherical metric: $$\phi^\#([z,w]) = \lim_{[u,v]\to[z,w]} \frac{||\phi([u,v]), \phi([z,w])||}{||[u,v], [z,w]||}\ .$$ Using the expression for the spherical derivative in the affine plane $K$ we find (for $z\in K$ not a pole of $\phi$) 
\begin{align}
\phi^\#(z) &= \lim_{w\to z} \frac{||\phi(z), \phi(w)||}{||z, w||}\nonumber \\
& = \lim_{w\to z} \left(\frac{|\phi(z) - \phi(w)|_v}{|z-w|_v} \cdot \frac{||z, w||}{||\phi(z), \phi(w)||}\right)\nonumber\\
& = |\phi'(z)|_v \frac{\max(1, |z|_v)^2}{\max(1, |\phi(z)|_v)^2}\label{eq:sphericalderivative}\ .
\end{align}

With this, we see that the spherical derivative satisfies the chain rule:

\begin{lemma}\label{lem:sphericalchainrule}
Let $\phi, \psi\in K(z)$. For $z\in K\setminus \{\textrm{poles of }\phi\circ\psi, \textrm{ poles of }\psi\}$, the spherical derivative satisfies the following version of the chain rule:
\[
(\phi\circ\psi)^\#(z) = \phi^\#(\psi(z))\cdot \psi^\#(z)\ .
\]
\end{lemma}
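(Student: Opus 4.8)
The plan is to prove the chain rule for the spherical derivative by reducing it to the ordinary chain rule for $|\phi'|_v$ together with the explicit formula \eqref{eq:sphericalderivative}. First I would observe that both sides are, away from the excluded set of poles, just real-valued functions built out of absolute values, so there is no subtlety about taking limits once \eqref{eq:sphericalderivative} is in hand; the content is purely algebraic bookkeeping of the $\max(1,|\cdot|_v)$ factors.

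Concretely, for $z\in K$ avoiding the poles of $\psi$ and of $\phi\circ\psi$, formula \eqref{eq:sphericalderivative} applied to $\phi\circ\psi$ gives
\[
(\phi\circ\psi)^\#(z) = |(\phi\circ\psi)'(z)|_v\,\frac{\max(1,|z|_v)^2}{\max(1,|\phi(\psi(z))|_v)^2}\ .
\]
By the ordinary chain rule in $K(z)$ we have $(\phi\circ\psi)'(z) = \phi'(\psi(z))\cdot\psi'(z)$, hence $|(\phi\circ\psi)'(z)|_v = |\phi'(\psi(z))|_v\cdot|\psi'(z)|_v$. Substituting and inserting the telescoping factor $\max(1,|\psi(z)|_v)^2$ in numerator and denominator yields
\[
(\phi\circ\psi)^\#(z) = \left(|\phi'(\psi(z))|_v\,\frac{\max(1,|\psi(z)|_v)^2}{\max(1,|\phi(\psi(z))|_v)^2}\right)\left(|\psi'(z)|_v\,\frac{\max(1,|z|_v)^2}{\max(1,|\psi(z)|_v)^2}\right)\ ,
\]
and the two parenthesized factors are exactly $\phi^\#(\psi(z))$ and $\psi^\#(z)$ by \eqref{eq:sphericalderivative}, giving the claim.

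The only genuine point requiring care — and the one I would flag as the main obstacle, modest though it is — is the case analysis at points where some coordinate lies at $\infty$, i.e. where $z$, $\psi(z)$, or $\phi(\psi(z))$ equals the point at infinity in $\PP^1(K)$; there \eqref{eq:sphericalderivative} is not literally applicable and one must instead argue from the coordinate-free definition $\phi^\#([z,w]) = \lim \|\phi([u,v]),\phi([z,w])\|/\|[u,v],[z,w]\|$. For these cases I would either pre-compose/post-compose with a suitable $\gamma\in\PGL_2(\mathcal{O})$ (which acts as an isometry for the spherical metric, hence has $\gamma^\#\equiv 1$ and does not affect either side) to move the offending point into $K$, or simply note that $\phi^\#$ was defined as a limit of a ratio and that the ratio multiplies correctly under composition directly from the definition, so the chain rule holds by continuity. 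Since the excluded set in the statement already removes the poles, and isometric changes of coordinates reduce everything to the affine computation above, no further difficulty arises.
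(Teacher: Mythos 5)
Your proof is correct and follows essentially the same computation as the paper: apply the formula \eqref{eq:sphericalderivative} to $\phi\circ\psi$, use the ordinary chain rule on $|(\phi\circ\psi)'|_v$, and telescope the $\max(1,|\psi(z)|_v)^2$ factor. Your closing paragraph about points at infinity is actually unnecessary under the stated hypotheses — since $z\in K$ and $z$ is neither a pole of $\psi$ nor of $\phi\circ\psi$, all three of $z$, $\psi(z)$, $\phi(\psi(z))$ are automatically finite, so the affine formula applies directly.
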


\begin{proof}
For $z\in K\setminus \{\textrm{poles of }\phi\circ\psi,\textrm{ poles of }\psi\}$, the expression in (\ref{eq:sphericalderivative}) and the usual chain rule give
\begin{align*}
(\phi\circ\psi)^\#(z) &= \frac{\max(1, |z|_v)^2}{\max(1, |\phi\circ\psi(z)|_v^2)} \cdot |(\phi\circ\psi)'(z)|_v\\
& = \frac{\max(1, |\psi(z)|_v)^2}{\max(1, |\phi(\psi(z))|_v)^2}\cdot \frac{\max(1, |z|_v)^2}{\max(1, |\psi(z)|_v)^2}\cdot |\phi'(\psi(z))|_v\cdot |\psi'(z)|_v\\
& = \left(\frac{\max(1, |\psi(z)|_v)^2}{\max(1, |\phi(\psi(z))|_v)^2}\cdot |\phi'(\psi(z))|_v\right)\cdot \left(\frac{\max(1, |z|_v)^2}{\max(1, |\psi(z)|_v)^2} \cdot |\psi'(z)|_v\right)\\
& = \phi^\#(\psi(z))\cdot \psi^\#(z)\ .
\end{align*}
\end{proof}

\subsection{The Berkovich Projective Line}\label{sect:Berk}
For a thorough development of the Berkovich projective line, the reader is directed to the original work of Berkovich \cite{Ber}, or to the book of Baker and Rumely \cite{BR}, particularly Chapters 1 and 2. We briefly recall the construction of $\pberk$ here, following the notation of \cite{BR}.

Let $K$ be a complete, algebraically closed non-Archimedean valued field. In the topology induced by the absolute value $|\cdot|_v$ on $K$, the projective line $\PP^1(K)$ is totally disconnected and so not a suitable space for analysis or potential theory. One works instead with the Berkovich projective line $\pberk$, a uniquely path connected, compact, Hausdorff space that contains $\PP^1(K)$ as a dense subset. 

Formally, the Berkovich affine line $\aberk$ is the collection of multiplicative seminorms on $K[T]$ which extend the absolute value in $K$.  Points $[\cdot]_z\in \aberk$ correspond to (cofinal equivalence classes of) nested decreasing sequences of closed discs $D(a_i, r_i) \supset D(a_{i+1}, r_{i+1})\supset \cdots$ by the identification $$[f]_z = \lim_{i\to \infty}\  \sup_{w\in D(a_i, r_i)} |f(w)|_v \ .$$ If the intersection of the discs $D(a_i, r_i)$ is a single point $\{a\}\subseteq K$, we call $[\cdot]_z$ a point of type I; this gives a natural embedding of $K$ into $\aberk$. If the intersection of the $D(a_i, r_i)$ is again a disc $D(a,r)$, we call $[\cdot]_z$ a point of type II or of type III, corresponding to whether $r\in |K^\times|$ or $r\not\in |K^\times|$. If the intersection of the discs $D(a_i, r_i)$ is empty, but $\lim_{i\to\infty} r_i >0$, then $[\cdot]_z$ is called a point of type IV. Such points do not occur when the field $K$ is spherically complete. We will often denote points of type I, II, and III by $\zeta_{a,r}$ for shorthand, where, as above $D(a,r) = \bigcap D(a_i, r_i)$. The type II point corresponding to the unit disc is called the Gauss point, and will be denoted $\zetaG = \zeta_{0,1}$. \\

The weak topology on $\aberk$ is the coarsest topology such that the map $z\mapsto [f]_z$ on $\aberk$ is continuous for all $f\in K[T]$. Below we will give an explicit (sub)basis for this topology. In the weak topology, points of type I are dense in $\aberk$; consequently, the action of a nonconstant rational map $\phi$ on $K$ can be extended continuously to all of $\aberk$. Heuristically, for a point $[\cdot]_z\in \aberk$, one has $[f]_{\phi(z)} = [f\circ \phi]_z$, though there are some technical points that must be addressed to make this precise; see \cite{BR} Section 2.3. It is important to note that the action of a rational map preserves the types of points in $\aberk$. We also have that points of type II are also dense in the weak topology, as are points of type III.

One obtains the Berkovich projective line $\pberk$ by glueing two copies of $\aberk$ via the involution $\theta(z) = \frac{1}{z}$; see \cite{BR} Section 2.2. Equivalently, $\pberk$ can be obtained as the one-point compactification of $\aberk$ by adding a single type I point $\infty$. In the weak topology, $\pberk$ is compact and Hausdorff, but in general is not metrisable. The Berkovich hyperbolic line is $\hberk = \pberk \setminus \PP^1(K)$.

The automorphism group of $\pberk$ is the set of Mobius transformations $\PGL_2(K)$. Such maps act transitively on type II points in $\hberk$, with stabilizer isomorphic to $K^\times \PGL_2(\mathcal{O})$ (see \cite{Ru1}, Proposition 1.1). Additionally, maps in $\PGL_2(K)$ act transitively on arcs $[a,b]$ with type I endpoints; see \cite{BR} Corollary 2.13.

\subsubsection{Tree Structure}\label{sect:tree}
A collection of discs $\{\zeta_{a,r}\}_{r\in [s,t]}$ gives an embedding of the real interval $[s,t]$ into $\pberk$ and allows one to endow $\pberk$ with the structure of a tree. The endpoints of this tree are points of type I or of type IV. Each point of type II is a branch point, and the branches are in one-to-one correspondence with elements of $\PP^1(k)$. To see this, note that the unit disc can be written as $$D(0,1) = \bigsqcup_{a\in k} b_a + D(0,1)^-\ ,$$ where $b_a\in \mathcal{O}$ satisfies $\tilde{b}_a = a$. The rays $\{\zeta_{b_a, r}\}_{r\in [0,1]}$ all terminate at $\zetaG$ and hence form branches of the tree at $\zetaG$. The branch towards $\infty$ corresponds to the segment $\{\zeta_{0, R}\}_{R\in [1, \infty)}$. Points of type III are not branch points.

Given a point $\zeta\in \pberk$, we define the tangent space $T_\zeta$ at $\zeta$ to be the collection of equivalence classes of paths emanating from $\zeta$. For points of type I and type IV, $T_\zeta$ consists of a single equivalence class consisting of the paths leading into $\hberk$. For points of type II, $T_\zeta$ is in one-to-one correspondence with $\PP^1(k)$; this is seen by considering $\zeta = \zetaG$ and applying the remarks in the previous paragraph. For points of type III, $T_\zeta$ has two equivalence classes. We will most often call elements of $T_\zeta$ `directions', and denote them by $\vv, \vw$, etc.

Alternatively, consider the connected components of $\pberk \setminus \{\zeta\}$. These are in one-to-one correspondence with the tangent directions at $\zeta$, and we denote each such component by $B_{\vv}(\zeta)^-$, where $\vv\in T_\zeta$ (in \cite{BR}, this is denoted by $B_\zeta(\vv)^-$). The collection of all such components as $\zeta$ varies in $\pberk $and $\vv$ varies in $T_\zeta$ gives a subbasis for the weak topology on $\pberk$. 

A subtree $\Gamma\subseteq \hberk$ will be a closed, connected subgraph of $\hberk$ such that $\Gamma$ has finitely many edges. Given a point $P\in \Gamma$, the tangent space $T_P(\Gamma)$ is the collection of equivalence classes of paths emanating from $P$ that intersect $\Gamma$. By assumption, there are necessarily finitely many such directions. The valence $v_\Gamma(P)$ is the cardinality of $T_P(\Gamma)$. 

The tree structure on $\pberk$ also allows one to define a natural retraction map $r_{V,U}$ between a set $V\subseteq \pberk$ and a closed, connected subset $U\subseteq \pberk$: any point in $U$ is fixed by $r_{V,U}$. Fix a point $x\in U$, and for any $y\in V\setminus U$ define $r_{V,U}(y) $ to be the first point on the segment $[y, x]$ that intersects $U$. By the unique path connectedness of $\pberk$, this map is well-defined and independent of our choice of $x\in U$. If $V=\pberk$, we may sometimes abbreviate this map $r_{U}$. 

\subsubsection{Metrics on $\pberk$}\label{sect:metrics}

The tree structure also implies that $\pberk$ is uniquely path connected. Fix a point $\zeta\in \pberk$. We define the join of two points $x,y\in \pberk$ relative to $\zeta$, denoted $x\wedge_\zeta y$, to be the first point where the segments $[x,\zeta]$ and $[y, \zeta]$ intersect. 

The `big metric' on $\hberk$ is defined as follows: given two points $x,y\in \hberk$, let $$\rho(x,y) = 2\log_v \diam_{\infty}(x\wedge_\infty y) - \log_v \diam_\infty(x) - \log_v \diam_\infty(y)\ ;$$ here, $\diam_\infty(x):=\lim_{i\to\infty} r_i$, where $\{D(a_i, r_i)\}$ is a representative of the class of discs corresponding to $x$. For points of type II or of type III, it is simply the diameter of the underlying disc. We extend this metric to all of $\pberk$ by saying $\rho(x,y) = \infty$ if either $x$ or $y$ is in $\PP^1(K)$. The metric topology defined by $\rho$ is called the strong topology. It is strictly finer than the weak topology on $\pberk$. 

\subsection{Potential Theory on $\pberk$}\label{sect:Berkpotential}
In this section, we develop some potential theory on the Berkovich projective line $\pberk$ over $K$. A rigorous development of this subject can also be found in Chapters 3-9 of the book \cite{BR}. A development of potential theory on more general Berkovich curves has also been carried out in the thesis of Thuillier \cite{Thu}.

\subsubsection{The Fundamental Potential Kernel and the Hsia Kernel}

For a fixed $z\in \pberk$, one defines a potential kernel on $\pberk$, denoted $\langle x,y\rangle_z$, as follows: consider the paths $[x,z]$, $[y, z]$, and let $w = x\wedge_\zeta y$ be the first point where these paths intersect. We define $$\langle x,y\rangle_z = \rho(w, z)\ .$$ 

For fixed $y$, the function $\langle \cdot, y\rangle_z$ is increasing with a constant slope 1 along the segment $[z,y]$, and is constant on branches off of this segment. More generally, $\langle x,y\rangle_z$ is non-negative, symmetric in $x$ and $y$, and jointly continuous (for the \emph{strong} topology) in the variables $x,y$, and $z$ (see \cite{BR} Proposition 3.3 -- our $\langle x,y\rangle_z$ is their $j_z(x,y)$). 

We can use the potential kernel above to define an extension of the classical spherical distance to $\pberk$ by letting $$-\log_v ||x,y|| = \langle x,y\rangle_{\zetaG}\ .$$ This definition can be generalized to more arbitrary kernels called Hsia kernels. For a fixed $\zeta\in \pberk$, the generalized Hsia kernel at $\zeta$ is defined to be $$\delta(x,y)_\zeta = \frac{||x,y||}{||x,\zeta||\cdot ||y, \zeta||}\ ,$$for all $x,y\in \pberk \setminus \{\zeta\}$. 

We remark that when $\zeta = \infty$ and $x,y$ are points of type I, then $\delta(x,y)_\infty$ is the quantity $\diam_\infty(x\wedge_\infty y)$ appearing in the definition of the big metric above. More generally, we can use the Hsia kernel to define a diameter relative to any fixed point $\zeta\in \pberk$ by $$\diam_{\zeta}(x) = \delta(x,x)_\zeta\ .$$ This admits a decomposition as (see \cite{BR} Equation 4.32) $$\diam_{\zeta}(x) = \frac{||x,x||}{||x, \zeta||^2}\ .$$

The `small metric', defined on all of $\pberk$, can be defined in terms of the diameter functions introduced above. For $x,y\in \pberk$, we have $$\textrm{d}_{\pberk}(x,y) = 2\diam_{\zetaG}(x\wedge_{\zetaG} y) - \diam_{\zetaG}(x) - \diam_{\zetaG}(y)\ .$$ This function is an extension of (twice) the classical spherical distance on $\PP^1(K)$. The small metric $\textrm{d}_{\pberk}$ also generates the strong topology, since $\rho$ and $\textrm{d}_{\pberk}$ are locally bounded in terms of one another. The action of $\phi$ is Lipschitz continuous with respect to the small metric (see \cite{BR} Proposition 9.37); explicit bounds on the Lipschitz constant are given in \cite{RW}.

\subsubsection{The Laplacian on $\pberk$}

Fix a subtree $\Gamma\subseteq \hberk$, and let $f:\Gamma\to \RR$. Given a point $P\in \Gamma$, the derivative of $f$ in the direction $\vv\in T_P(\Gamma)$, if it exists, is the limit $$\partial_{\vv}(f)(P) := \lim_{t\to 0} \frac{ f(P+t\vv)-f(P) }{t}\ .$$ A function $f:\Gamma\to \RR$ is said to be continuous and piecewise affine if $f$ is continuous on $\Gamma$ and there is a finite set $\mathcal{S}\subseteq \Gamma$ which includes the branch points of $\Gamma$ and such that the restriction of $f$ to any segment of $\Gamma\setminus \mathcal{S}$ is affine. We denote the space of all such functions $\CPA(\Gamma)$.

The Laplacian of a function $f\in \CPA(\Gamma)$ is defined to be the measure $$\Delta_\Gamma f = -\sum_{P\in \Gamma} \sum_{\vv\in T_P(\Gamma)} \partial_{\vv}(f)(P) [P]\ .$$ Here $[P]$ is the Dirac mass at $P$. This definition can be extended to a broader class of functions, called functions of bounded differential variation and denoted $\BDV(\Gamma)$. Essentially, these are functions which `do not oscillate too much' on $\Gamma$. 

We will use the fact that the Laplacian satisfies the following property (see \cite{BR} Proposition 3.14): for any $z\in \Gamma$ and any $f,g\in \BDV(\Gamma)$, $$\int_\Gamma f d\Delta_\Gamma(g) = \int_\Gamma g d\Delta_\Gamma(f) = \iint_{\Gamma\times \Gamma} \langle x,y\rangle_z d\Delta_\Gamma(f)(x) d\Delta_\Gamma(g)(y)\ .$$

The definition of the Laplacian can be extended to subdomains $U\subseteq \pberk$ by means of limits of coherent measures. Given a subdomain $U\subseteq \pberk$ and an exhaustion $\{\Gamma_i\}_{i\in I}$ of $U$ (here, $I$ is any directed set), we say that a family of measures $\{u_{\Gamma_i}\}_{i\in I}$ is coherent if the total masses are uniformly bounded $|u_\Gamma|(\Gamma) \leq B$ by a constant $B$ independent of $\Gamma$ and for any pair of graphs $\Gamma_1 \subseteq \Gamma_2$, we have $$u_{\Gamma_1}(A) = u_{\Gamma_2}(r_{\Gamma_2, \Gamma_1}^{-1}(A))$$ for each Borel subset $A\subseteq \Gamma_1$. One can show (see \cite{BR} Proposition 5.10) that coherent measures can be glued together to define a measure $\mu$ on $U$ that satisfies $\mu_{\Gamma_i}(A) = \mu (r_{U, \Gamma_i}^{-1}(A))$ for any $\Gamma_i$ and any Borel set $A\subseteq \Gamma_i$. 

After suitably defining a space $\BDV(U)$ of functions of bounded differential variation on $U$ (see \cite{BR} Section 5.4), the Laplacian on $U$ is defined to be $$\Delta_U(f) = \lim_{\substack{\to \\ \Gamma}} \Delta_\Gamma (f)\ .$$

We have chosen to use this construction of the Laplacian because the notion of a coherent measure will be used in the proof of Proposition~\ref{prop:wildintegrallowerbound} below. Alternative constructions of the Laplacian on $\pberk$ have been given in the work of Favre, Jonsson, and Rivera-Letelier (\cite{FRL} and \cite{FRLErgodic}), and in the work of Thuillier (\cite{Thu}). A comparison of these Laplacians is carried out in \cite{BR} Section 5.8; for our purposes, it is important to note that the Laplacian described here is the \emph{negative} of the Laplacian constructed by Favre, Jonsson, and Rivera-Letelier. 

\subsubsection{Potential Functions}\label{sect:potentialfunctions}
We will denote by $\mathcal{M}$ the space of all finite, signed Borel measures on $\pberk$. It will be important later to note that these measures are also Radon measures on $\pberk$ (\cite{BR} Lemma 5.6). 

Fix a base point $\zeta_0\in \hberk$. Given a finite signed Borel measure $\lambda\in \mathcal{M}$, one can show that $$u_\lambda(z, \zeta_0) := -\int_{\pberk} \log_v\delta(w, z)_{\zeta_0} d\lambda(w)$$ is a potential function for $\lambda-[\zeta_0]$ in the sense that $\Delta u_\lambda(\cdot, \zeta_0) = \lambda-[\zeta_0]$ (\cite{BR} Example 5.22). This differs slightly from the potential functions defined in \cite{FRLErgodic} Section 4.1 by an additive constant and by a negative sign. As a special case, when $\lambda=[z]$ for any point $z\in \pberk$, we have $$\Delta u_\lambda(\cdot, \zeta_0) = \Delta\langle \cdot, z\rangle_{\zeta_0} = [z] - [\zeta_0]\ .$$

We say that $\lambda$ has continuous (resp. bounded) potentials if and only if for some $\zeta_0\in \hberk$, the function $u_\lambda(\cdot, \zeta_0)$ is continuous (resp. bounded) on $\pberk$. Using the change of variables formula for the Hsia kernel (see \cite{BR} Equation (4.29)), this property is independent of the basepoint $\zeta_0$. 

\subsubsection{Multiplicities and Imbalance Formulas}

In this section, we briefly describe a notion of multiplicity on $\pberk$ which extends the usual notion of multiplicity on $\PP^1(K)$. These multiplicities are used to give a lower bound for a distortion constant $\hat{\delta}(\phi, \zeta)$. Lower bounds of this sort were used in an essential way in \cite{BIJL}.

There are a number of equivalent approaches that one can take to defining multiplicities on the Berkovich line. We begin by defining the directional multiplicity of $\phi$ at a point in $\pberk$. Fix $P\in \pberk$, and let $\vv\in T_P$. There exists an integer $m\in \{1, 2, ..., d\}$ such that, for any $y\in B_{\vv}(P)^-$ sufficiently close to $P$, we have $$\rho(\phi(x), \phi(y)) = m\cdot \rho(x,y)\ .$$ We call $m=m_\phi(P, \vv)$ the directional multiplicity of $\phi$ at $P$ in the direction $\vv$. We define the multiplicity $m_\phi(P)$ as follows (see \cite{BR} Theorem 9.22 (C)) : fix any direction $\vw\in T_{\phi(P)}$ at the image $\phi(P)$. Then $$m_\phi(P) = \sum_{\substack{\vv\in T_P \\ \phi_* \vv = \vw}} m_\phi(P, \vv)\ .$$ The notion of multiplicity given here is consistent with the usual topological notion of multiplicity in terms of counting preimages (see \cite{BR} Corollary 9.17). 

Let $\zeta=\zeta_{a,r}\in \pberk$ be a point of type II, and let $\xi=\phi(\zeta)$. Choose $\alpha, \beta\in \pberk$ to lie in different connected components of $\pberk \setminus \{\xi\}$, and let $\alpha_1, ..., \alpha_d, \beta_1, ..., \beta_d$ to be their respective preimages, listed with multiplicity. Fix any $\vv\in T_\zeta$. Then $$N_{\vv}^-(\phi, \zeta_{a,r}, \alpha) = \# \{\alpha_1, ..., \alpha_n\} \cap B_{\vv}(\zeta)^-$$ counts the number of preimages of $\alpha$ lying in $B_{\vv}(\zeta)^-$.

We finally introduce the surplus multiplicity as a means of counting solutions; a good reference on surplus multiplicity is the paper of Xander Faber \cite{XF}. Fix a point $\zeta\in \hberk$, and let $\vv\in T_\zeta$. The image $\phi(B_{\vv}(\zeta)^-)$ will be either the ball $B_{\phi_* \vv}(\phi(\zeta))^-$, or else all of $\pberk$. In terms of points of type I, this implies that for a given disc $D(a,r)$, either $\phi(D(a,r))$ is again a disc $D(\phi(a), s)$ or else is the entire projective line $\PP^1(K)$. More precisely, there is an integer $s$ depending only on $\zeta$ and $\vv$ such that, for any $y\in \PP^1(K)$, $$\#\phi^{-1}(y) \cap B_{\vv}(\zeta)^- =\left\{\begin{matrix} m_\phi(\zeta, \vv) + s, & y \in B_{\phi_* \vv}(\phi(\zeta))^-\\ s, & y\not\in B_{\phi_* \vv}(\phi(\zeta))^-\end{matrix}\right. \ .$$ We write $s_\phi(\zeta, \vv) = s$ for this integer, which is called the surplus multiplicity. It is an integer in the range $\{0,1, 2, ..., d-1\}$. 

\subsection{Pullbacks of Maps and Measures}\label{sect:pullbacks}

With the notion of multiplicities introduced above, one can define a notion of pullback by a rational map $\phi$ both for functions and for measures. Let $\phi \in K(z)$ have degree $d\geq 2$. For any continuous, real valued function $g$ on $\pberk$, we define the pullback of $g$ under $\phi$ to be the real-valued function $$\phi_* g(x) = \sum_{\phi(y) = x} m_\phi(y) g(y)\ .$$ This is also known as the Frobenius-Perron or Transfer operator.
Given a measure $\lambda\in \mathcal{M}$, we define the pullback of $\lambda$ under $\phi$ to be the unique Radon measure $\phi^* \lambda$ satisfying $$\int g\ d\phi^* \lambda = \int \phi_* g\ d\lambda\ .$$ The existence and uniqueness of the measure $\phi^*\lambda$ is guaranteed by the Riesz representation theorem (see \cite{Fol} Theorem 7.17). As a special case, fix a point $z\in \pberk$ and let $\zeta_1, ..., \zeta_d$ be the pre-images of $z$ (listed with multiplicity). Then $\phi^* [z] = \sum_{k=1}^d [\zeta_i]$ is the measure supported at the preimages of $z$ weighted with multiplicity.

We will often use the fact that the pullback of the Laplacian satisfies $$\phi^* \Delta (f) = \Delta(f\circ \phi)$$ for any $f\in \BDV(\pberk)$ (see \cite{BR} Proposition 9.56). 

We will also make use of the pushforward operator for a measure $\lambda\in \mathcal{M}$: if $E$ is a measurable subset of $\pberk$, then $\phi_*\lambda(E) = \lambda(\phi^{-1}(E))$. Equivalently, for any continuous function $\psi$ on $\pberk$, the pushforward is the unique measure satisfying
\[
\int \psi\ d(\phi_*\lambda) = \int \psi\circ \phi\ d\lambda\ .
\]

\subsection{Dynamics on the Berkovich Line}

In this section, we briefly recall the construction of the equilibrium measure $\mu_\phi$ attached to a rational map $\phi$ acting on $\pberk$. 

\subsubsection{The Equilibrium Measure}
In this section, we will follow the approach of Favre and Rivera-Letelier (see \cite{FRLErgodic}, Proposition-D\'efinition 3.1) to construct the equilibrium measure. This construction will again be used in several of the proofs below.

A point $\zeta\in \hberk$ is said to be exceptional if the backwards orbit of $\zeta$ (under $\phi$) is finite. For any non-exceptional point $\zeta\in\hberk$, the function  $g_1(\cdot, \zeta) = \frac{1}{d}\sum_{\phi(\zeta_i) = \zeta} \langle \cdot, \zeta_i\rangle_\zeta$ is a potential for the measure $\frac{1}{d} \phi^* [\zeta] - [\zeta]$. By a telescoping series argument using the pullback formula for the Laplacian, one finds $$\frac{1}{d^n}(\phi^{n})^* [\zeta] = [\zeta] + \Delta g_n\ ,$$ where $$g_n(\cdot, \zeta) = \sum_{k=0}^{n-1} \frac{g_1(\phi^k(\cdot), \zeta)}{d^k}\ .$$ Writing $\nu_n = \frac{1}{d^n} (\phi^n)^* [\zeta]$, one can readily check that these measures satisfy $\phi_* \nu_n = \nu_{n-1}$. 

The functions $g_n(\cdot, \zeta)$ converge uniformly on $\pberk$ to a function $g_\infty(\cdot, \zeta)$, and hence the measures $\frac{1}{d^n}(\phi^{n})^* [\zeta]$ converge weakly\footnote{One can also obtain equidistribution for pullbacks of non-exceptional type I points if $K$ is the completion of the algebraic closure of the completion of a number field at a finite place; see \cite{BRsmht} Theorem 2.3 and \cite{FRL} Th\'eor\`eme 2.} to a measure $\mu_\phi$ called the equilibrium measure of $\phi$ (\cite{FRLErgodic}, Proposition-D\'efinition 3.1). It is a probability measure that independent of the choice of $\zeta$, and which satisfies $$\phi_* \mu_\phi = \mu_\phi\ , \ \phi^* \mu_\phi = d\cdot \mu_\phi\ .$$ The measure $\mu_\phi$ charges points if and only if $\phi$ has good reduction; in this case, there is a unique point receiving mass which corresponds to the conjugate attaining potential good reduction. In particular, $\mu_\phi$ never charges type I points (see, e.g. \cite{BR} Corollary 10.47).

We remark that, in contrast to the situation over $\CC$, the measure $\mu_\phi$ is \emph{not}, in general, the measure of maximal entropy for $\phi$ (\cite{FRLErgodic}, Section 5.2). Nevertheless, it is mixing with respect to $\phi$, it does not charge points of $\PP^1(K)$ (\cite{FRLErgodic}, Th\'eor\`eme A). The support of $\mu_\phi$ is called the Berkovich Julia set of $\phi$, and will be denoted $\mathcal{J}(\phi)$; its complement is the Fatou set $\mathcal{F}(\phi)$ (\cite{BR} Chapter 10)

\section{Properties and Estimates on $L_v(\phi)$:}\label{sect:propLyap}
Here we establish some of the basic properties of $L_v(\phi)$. 
\subsection{An Alternate Formula for $L_v(\phi)$}
Since $\log_v[\phi^\#]\in L^1(\mu_\phi)$ (e.g., equation (1.2) in \cite{YO} is finite), the Birkhoff ergodic theorem tells us that the Lyapunov exponent $L_v(\phi)$ can be computed $$L_v(\phi) =\lim_{n\to\infty} \frac{1}{n} \sum_{k=0}^n \log_v [\phi^\#]_{\phi^k(z)} $$ for $\mu_\phi$-almost every $z\in \pberk$. Given such a $z$, we let $$\overline{L}_v(\phi)(z) := \lim_{n\to\infty} \frac{1}{n} \sum_{k=0}^n \log_v [\phi^\#]_{\phi^k(z)}\ .$$Similarly, note that $\log_v[\phi']_z\in L^1(\mu_\phi)$: this follows from the fact that $\log_v[\phi']_z$ can be written as a finite linear combination of potentials $\langle z, a_i\rangle_{\zeta}$ for a fixed basepoint $\zeta\in \hberk$ and appropriate $a_i\in \mathbb{P}^1(K)$, and the functions $\langle \cdot, a_i\rangle_\zeta$ are in $L^1(\mu_\phi)$ (see, e.g., \cite{FRLErgodic} Lemma 4.3). Letting $$\hat{L}_v(\phi) = \int_{\pberk} \log_v [\phi']_z d\mu_\phi\ ,$$ the Birkhoff ergodic theorem tells us that $$\hat{L}_{v}(\phi) =  \lim_{n\to\infty} \frac{1}{n}\sum_{k=0}^{n} \log_v[\phi']_{\phi^k(z)}\  $$ for $\mu_\phi$ almost every $z$. Given such a $z$, we define $$\overline{\hat{L}}_v(\phi)(z) = \lim_{n\to\infty} \frac{1}{n}\sum_{k=0}^{n} \log_v[\phi']_{\phi^k(z)}\ .$$

Our goal in this section is to use the two time-averages $\overline{L}_v, \overline{\hat{L}}_v$ to show that the two space averages $L_v, \hat{L}_v$ are equal. To do this, we begin with two lemmas.

\begin{lemma}\label{lem:coordchangeok}
The Lyapunov exponent is coordinate invariant; that is, for any $\gamma\in \PGL_2(K)$, we have $$L_v(\phi) = L_v(\phi^\gamma)\ .$$
\end{lemma}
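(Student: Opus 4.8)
The plan is to integrate the chain rule for the spherical derivative (Lemma~\ref{lem:sphericalchainrule}) against the equilibrium measure. I would first record the two facts that make this work. (i) The equilibrium measure is conjugation-equivariant, $\mu_{\phi^\gamma}=\gamma^*\mu_\phi$ (equivalently $(\gamma^{-1})_*\mu_\phi$). This follows from the characterization of $\mu_\phi$ as the unique $\phi$-invariant probability measure with $\phi^*\mu_\phi=d\mu_\phi$: since $\gamma$ has degree one, $\gamma^*\mu_\phi$ is again a probability measure, and the identity $(\phi^\gamma)^*\gamma^*=(\gamma\circ\phi^\gamma)^*=(\phi\circ\gamma)^*=\gamma^*\phi^*$ gives $(\phi^\gamma)^*(\gamma^*\mu_\phi)=\gamma^*(d\mu_\phi)=d(\gamma^*\mu_\phi)$, and $\phi^\gamma$-invariance is checked the same way (alternatively one takes the weak limit in $\frac{1}{d^n}((\phi^\gamma)^n)^*[\xi]=\gamma^*\big(\frac{1}{d^n}(\phi^n)^*[\gamma(\xi)]\big)$). (ii) Lemma~\ref{lem:sphericalchainrule} extends from $K$ to all of $\pberk$: both $z\mapsto[(\phi\circ\psi)^\#]_z$ and $z\mapsto[\phi^\#]_{\psi(z)}\,[\psi^\#]_z$ are continuous on $\pberk$ and agree on the dense set of type~I points that are not poles, hence everywhere.

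With these in hand, expand using $\phi^\gamma=\gamma^{-1}\circ\phi\circ\gamma$, the chain rule twice, and $\phi\circ\gamma=\gamma\circ\phi^\gamma$:
\[
\log_v[(\phi^\gamma)^\#]_z=\log_v[(\gamma^{-1})^\#]_{\gamma(\phi^\gamma(z))}+\log_v[\phi^\#]_{\gamma(z)}+\log_v[\gamma^\#]_z\ .
\]
Since $\gamma$ and $\gamma^{-1}$ are M\"obius maps they have no critical points, so $\log_v[\gamma^\#]$ and $\log_v[(\gamma^{-1})^\#]$ are continuous and strictly positive on the compact space $\pberk$, hence bounded; together with $\log_v[\phi^\#]\in L^1(\mu_\phi)$ and (i) this shows each of the three terms lies in $L^1(\mu_{\phi^\gamma})$, so I may integrate termwise.

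The middle term gives $\int\log_v[\phi^\#]_{\gamma(z)}\,d\mu_{\phi^\gamma}(z)=\int\log_v[\phi^\#]_w\,d\mu_\phi(w)=L_v(\phi)$ by the change of variables $\mu_{\phi^\gamma}=\gamma^*\mu_\phi$. For the first term, $\phi^\gamma$-invariance $(\phi^\gamma)_*\mu_{\phi^\gamma}=\mu_{\phi^\gamma}$ removes $\phi^\gamma$, leaving $\int\log_v[(\gamma^{-1})^\#]_{\gamma(z)}\,d\mu_{\phi^\gamma}(z)$; adding the third term and using the chain rule for $\gamma^{-1}\circ\gamma=\mathrm{id}$ (whose spherical derivative is identically $1$), $\log_v[(\gamma^{-1})^\#]_{\gamma(z)}+\log_v[\gamma^\#]_z=\log_v 1=0$, so these two integrals cancel. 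Hence $L_v(\phi^\gamma)=L_v(\phi)$.

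The only real delicacy --- ``the obstacle'', such as it is --- is keeping the conjugation and pullback/pushforward conventions consistent, so that (i) and the change of variables point the right way, and confirming that the $L^1$ splitting is legitimate (i.e.\ that $\log_v[\gamma^\#]$ is genuinely bounded, which is exactly where ``M\"obius, hence no critical point'' is used). If one prefers to avoid the measure-theoretic change of variables, there is a parallel argument via Birkhoff time averages: the chain rule gives $\frac{1}{n}\log_v[((\phi^\gamma)^n)^\#]_z=\frac{1}{n}\log_v[(\gamma^{-1})^\#]_{\phi^n(\gamma(z))}+\frac{1}{n}\log_v[(\phi^n)^\#]_{\gamma(z)}+\frac{1}{n}\log_v[\gamma^\#]_z$, and the first and last terms are $O(1/n)$, so letting $n\to\infty$ yields $\overline{L}_v(\phi^\gamma)(z)=\overline{L}_v(\phi)(\gamma(z))$ for $\mu_{\phi^\gamma}$-a.e.\ $z$; integrating (using ergodicity of $\mu_\phi$ and $\mu_{\phi^\gamma}$, which holds since these measures are mixing) gives the same conclusion.
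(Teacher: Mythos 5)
Your proof is correct and follows essentially the same route as the paper: expand via the chain rule for the spherical derivative, use $\mu_{\phi^\gamma}=(\gamma^{-1})_*\mu_\phi$ to change variables in the middle term, and cancel the outer two terms using $\phi$-invariance together with the identity $[(\gamma^{-1})^\#\circ\gamma]_z\cdot[\gamma^\#]_z=1$. The only cosmetic differences are that you derive the equivariance of $\mu_\phi$ from the uniqueness characterization rather than the paper's direct pullback computation, and you perform the cancellation before rather than after the change of variables; both are fine.
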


\begin{proof}
The proof is an application of the chain rule and the change of variables formula for integrals. Recall from (\ref{eq:sphericalderivative}) that for any rational map $\phi(z)$ of degree $d$ and any $z\in K\setminus \{\textrm{poles of }\phi\}$, we have \begin{align}\label{eq:gsharpgprime}\phi^\#(z) = \frac{\max(1, |z|_v)^2}{\max(1, |\phi(z)|_v)^2} |\phi'(z)|_v\ .\end{align}
In particular, for $\gamma \in \PGL_2(K)$ we have \begin{align*}(\gamma^{-1})^\#(\gamma(z)) &= \frac{\max(1, |\gamma(z)|_v)^2}{\max(1, |z|_v)^2}\cdot |(\gamma^{-1})'(\gamma(z))|_v\\ & =  \frac{\max(1, |\gamma(z)|_v)^2}{\max(1, |z|_v)^2}\cdot \frac{1}{|\gamma(z)|_v}\\ & = \frac{1}{\gamma^\#(z)}\ .\end{align*} Note that by continuity this extends to $\pberk$, where it can be formulated as \begin{align}\label{eq:IFTchordal} [(\gamma^{-1})^\#\circ\gamma]_z \cdot [\gamma^\#]_z = 1\ .\end{align}

We now show that $\mu_{\phi^\gamma} = \gamma^{-1}_*\mu_{\phi}$. Recall that the invariant measure can be obtained as an averaged pullback of a fixed point mass (see \cite{FRLErgodic} Proposition-D\'efinition 3.1). More precisely, for a point $P\in \hberk$ that is non-exceptional for $\phi^\gamma$, we have $$\mu_{\phi^\gamma} = \lim_{n\to\infty} \frac{1}{d^n} (\phi^\gamma)^{n*}[P]\ ;$$ unravelling the definition of the pullback formula, we have

\begin{align*}
(\phi^\gamma)^*[P] & = \frac{1}{d} \sum_{\gamma^{-1} \circ \phi \circ \gamma (S) = P} m_{\gamma^{-1}\circ \phi\circ \gamma} (S) [S]\\
& = \frac{1}{d} \sum_{\phi(\gamma(S)) = \gamma(P)} m_{\phi}(\gamma(S)) [ \gamma^{-1} (\gamma(S))]\\
& = (\gamma^{-1})_*\left(\phi^* [\gamma(P)]\right)\ .
\end{align*} Generalizing this to the iterates of $\phi$ and passing to the limit, we obtain \begin{align}\label{eq:measurecomp}\mu_{\phi^\gamma} = (\gamma^{-1})_*\ \mu_{\phi,}\ .\end{align}

We finish the proof as follows. By (\ref{eq:sphericalderivative}) and the chain rule for the spherical derivative (see Lemma~\ref{lem:sphericalchainrule} above), we find $$(\gamma^{-1}\circ\phi\circ\gamma)^\#(z) = (\gamma^{-1})^\#(\phi(\gamma(z))) \cdot \phi^\#(\gamma(z)) \cdot \gamma^\#(z)\ $$ away from a finite set of points. Inserting this into the definition of the Lyapunov exponent gives 
\begin{equation}\label{eq:lyapchaindecomp}
L_v(\phi^\gamma) = \int_{\pberk}\log_v [(\gamma^{-1})^\#\circ\phi\circ\gamma]_zd\mu_{\phi^\gamma}(z) + \int_{\pberk} \log_v[\phi^\#\circ\gamma]_z d\mu_{\phi^\gamma}(z)\nonumber + \int_{\pberk} \log_v[\gamma^\#]_zd\mu_{\phi^\gamma}(z) \ .
\end{equation} Applying (\ref{eq:measurecomp}) to each of the integrals, we have

\begin{equation}\label{eq:lyapchaindecompmod}
L_v(\phi^\gamma) = \int_{\pberk}\log_v [(\gamma^{-1})^\#\circ\phi]_zd\mu_{\phi}(z) + \int_{\pberk} \log_v[\phi^\#]_z d\mu_{\phi}(z)+ \int_{\pberk} \log_v[\gamma^\#\circ \gamma^{-1}]_zd\mu_{\phi}(z) \ .
\end{equation} Here, the second term is precisely $L_v(\phi)$. By the $\phi$-invariance of $\mu_\phi$ and the relation in (\ref{eq:IFTchordal}), the first and third integrals sum to 0. 
\end{proof}

\begin{lemma}\label{lem:otherLyapcoordinvt}
For each $v\in \mathcal{M}_K$, the exponent $\hat{L}_v(\phi)$ is coordinate invariant; that is, for any $\gamma\in \PGL_2(K_v)$, we have $$\hat{L}_v(\phi) = \hat{L}_v(\phi^\gamma)\ .$$
\end{lemma}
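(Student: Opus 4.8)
The plan is to mimic the structure of the proof of Lemma~\ref{lem:coordchangeok}, but now working with the standard derivative $\phi'$ in place of the spherical derivative $\phi^\#$. The starting point is the relation between the two, which on $\PP^1(K)$ reads $\phi^\#(z) = \frac{\max(1,|z|_v)^2}{\max(1,|\phi(z)|_v)^2}|\phi'(z)|_v$, and extends by continuity to $\pberk$. Writing $D(z) := \log_v\frac{\max(1,|z|_v)^2}{\max(1,[\,\cdot\,]_z\text{-value at }\infty)}$ — more precisely, letting $h(z) = \log_v \max(1,\|z,\infty\|^{-1})$ or equivalently working with the potential $\log_v\delta(z,\infty)_{\zetaG}$ — we have, as functions on $\pberk$,
\[
\log_v[\phi^\#]_z = \log_v[\phi']_z + h(\phi(z)) \cdot(\text{coefficient}) - h(z)\cdot(\text{coefficient})\ ,
\]
so that $\log_v[\phi^\#]_z - \log_v[\phi']_z = H(\phi(z)) - H(z)$ for a fixed function $H$ (independent of $\phi$) built from the chordal distance to $\infty$. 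Integrating against $\mu_\phi$ and using $\phi$-invariance of $\mu_\phi$, the two correction terms cancel, giving $L_v(\phi) = \hat{L}_v(\phi)$ — provided $H\in L^1(\mu_\phi)$, which holds since $H$ is (up to constants) a potential $\langle\cdot,\infty\rangle_{\zetaG}$ of the type already noted to lie in $L^1(\mu_\phi)$.

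Granting that identity, coordinate invariance of $\hat{L}_v$ follows immediately from coordinate invariance of $L_v$: by Lemma~\ref{lem:coordchangeok} we have $L_v(\phi) = L_v(\phi^\gamma)$, and applying the identity $L_v(\psi) = \hat{L}_v(\psi)$ to both $\psi = \phi$ and $\psi = \phi^\gamma$ yields $\hat{L}_v(\phi) = L_v(\phi) = L_v(\phi^\gamma) = \hat{L}_v(\phi^\gamma)$. Alternatively — and this is perhaps the route the paper intends, since the alternate formula may not yet be available at this point in the text — one can argue directly as in Lemma~\ref{lem:coordchangeok}: use the ordinary chain rule $(\phi^\gamma)'(z) = (\gamma^{-1})'(\phi(\gamma(z)))\cdot\phi'(\gamma(z))\cdot\gamma'(z)$, extend to $\pberk$ by continuity, insert into the definition of $\hat{L}_v(\phi^\gamma)$, apply the change-of-variables formula $\mu_{\phi^\gamma} = (\gamma^{-1})_*\mu_\phi$ (equation~(\ref{eq:measurecomp})) to each of the three resulting integrals, and check that the first and third cancel. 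The middle term is $\hat{L}_v(\phi)$, and the outer two combine to $\int \log_v\big([(\gamma^{-1})'\circ\phi]_z\,[\gamma'\circ\gamma^{-1}]_z\big)\,d\mu_\phi$; one must show this vanishes.

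The main obstacle in the direct approach is precisely that last cancellation: unlike the spherical derivative, which satisfies the clean identity $[(\gamma^{-1})^\#\circ\gamma]_z\cdot[\gamma^\#]_z = 1$ from equation~(\ref{eq:IFTchordal}), the standard derivative does \emph{not} satisfy $(\gamma^{-1})'(\gamma(z))\cdot\gamma'(z) = 1$ in the chordal-invariant sense — there is a genuine nonzero contribution from the Jacobian factors $\max(1,|z|_v)$ versus $\max(1,|\gamma(z)|_v)$. So the terms do not cancel pointwise; they cancel only after integration, and only because $\mu_\phi$ is $\phi$-invariant. Concretely, $\log_v[(\gamma^{-1})'\circ\phi]_z + \log_v[\gamma'\circ\gamma^{-1}]_z$ should be expressible as $(G\circ\phi)(z) - G(z)$ for a fixed $L^1(\mu_\phi)$ function $G$ (again a chordal-type potential), whose integral against the invariant measure is zero. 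Verifying this telescoping structure — tracking the $\max(1,|\cdot|_v)$ factors through the chain rule and identifying the resulting function as a difference $G\circ\phi - G$ — is the computational heart of the argument; once it is in place, $\phi$-invariance of $\mu_\phi$ closes the proof. In light of this, the cleanest exposition is to establish the alternate formula $L_v(\phi)=\hat L_v(\phi)$ first (which isolates exactly this telescoping computation, done once with $\gamma = \mathrm{id}$ replaced by the derivative-conversion factor) and then deduce Lemma~\ref{lem:otherLyapcoordinvt} as a one-line corollary of Lemma~\ref{lem:coordchangeok}.
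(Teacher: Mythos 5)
Your ``direct approach'' is in fact exactly what the paper does, and the obstacle you identify in it is not real. You claim that, unlike the spherical derivative, the standard derivative fails to satisfy $(\gamma^{-1})'(\gamma(z))\cdot\gamma'(z)=1$, blaming Jacobian factors $\max(1,|z|_v)$ vs.\ $\max(1,|\gamma(z)|_v)$. But those factors enter only when one converts between $\phi^\#$ and $\phi'$; they play no role in the chain-rule identity $\gamma^{-1}\circ\gamma = \mathrm{id}$, whose differentiation gives $(\gamma^{-1})'(\gamma(z))\cdot\gamma'(z)=1$ \emph{exactly}, pointwise on $K$ and hence by continuity on $\pberk$. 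Indeed the paper's own derivation of equation~(\ref{eq:IFTchordal}) in Lemma~\ref{lem:coordchangeok} \emph{uses} this fact: the middle equality $\frac{\max(1,|\gamma(z)|_v)^2}{\max(1,|z|_v)^2}\cdot|(\gamma^{-1})'(\gamma(z))|_v = \frac{\max(1,|\gamma(z)|_v)^2}{\max(1,|z|_v)^2}\cdot\frac{1}{|\gamma'(z)|_v}$ is precisely the inverse function theorem for the standard derivative. With that identity in hand, the three-term decomposition of $\hat L_v(\phi^\gamma)$ via the ordinary chain rule, the change of variables $\mu_{\phi^\gamma}=(\gamma^{-1})_*\mu_\phi$ (which does not depend on which derivative is used), and the $\phi$-invariance of $\mu_\phi$ make the first and third terms cancel pointwise after the substitution $z\mapsto\gamma^{-1}(w)$, exactly as in Lemma~\ref{lem:coordchangeok}. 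The paper's proof is the one-sentence observation that (\ref{eq:IFTchordal}) has a standard-derivative analogue ``by the inverse function theorem.''

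Your alternative route---establish $L_v(\phi)=\hat L_v(\phi)$ first and then deduce coordinate invariance of $\hat L_v$ from Lemma~\ref{lem:coordchangeok}---would also reverse the paper's logical order: the proof of Proposition~\ref{prop:equalLyapunovs} begins by changing coordinates so that $\infty$ lies in the Fatou set, and explicitly invokes \emph{both} lemmas to justify that both $L_v$ and $\hat L_v$ are unaffected by this move. So as written in the paper, Proposition~\ref{prop:equalLyapunovs} depends on Lemma~\ref{lem:otherLyapcoordinvt}, not the other way around. Your sketch of a coordinate-free proof of $L_v=\hat L_v$ (integrating the cocycle $H\circ\phi - H$, with $H$ a potential-type function recording the singularity at $\infty$) could in principle break that dependence, but it requires handling the unboundedness of $H$ near $\infty$ directly, which is precisely the subtlety the paper sidesteps by first moving $\infty$ into the Fatou set. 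In any case the actual content of Lemma~\ref{lem:otherLyapcoordinvt} is elementary once the inverse-function-theorem identity is recognized; no detour through the $\phi^\#$/$\phi'$ comparison is needed.
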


\begin{proof}
The proof here is essentially identical to that of the previous lemma; expression (\ref{eq:IFTchordal}) holds analogously for the standard derivative by the inverse function theorem, and (\ref{eq:measurecomp}) is independent of the derivative of $\phi$.
\end{proof}

We now prove that the Lyapunov exponent defined in terms of the chordal derivative is in fact equal to the Lyapunov exponent defined in terms of the standard derivative. 

\begin{prop}\label{prop:equalLyapunovs}
Let $\phi\in K(z)$ have degree $d\geq 2$. Then $L_v(\phi) = \hat{L}_v(\phi)$.
\end{prop}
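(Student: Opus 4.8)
The plan is to exploit the relationship between the spherical derivative and the standard derivative recorded in equation~\eqref{eq:sphericalderivative}, namely $\phi^\#(z) = |\phi'(z)|_v \cdot \frac{\max(1,|z|_v)^2}{\max(1,|\phi(z)|_v)^2}$, which extends by continuity to $\pberk$ as $[\phi^\#]_z = [\phi']_z \cdot \frac{\diam_{\zetaG}(\text{something involving }z)}{\diam_{\zetaG}(\text{something involving }\phi(z))}$. More precisely, I would first reinterpret the correction factor $\max(1,|z|_v)^2/\max(1,|\phi(z)|_v)^2$ in terms of the Hsia kernel / diameter functions at $\zetaG$: one has $-\log_v\|z,\infty\| = \langle z,\infty\rangle_{\zetaG}$, and $\max(1,|z|_v)$ is exactly $\|z,\infty\|^{-1}$-type quantity, so the correction term becomes $h(z) - h(\phi(z))$ for a suitable $\mu_\phi$-integrable function $h$ (a potential function, a multiple of $\langle \cdot, \infty\rangle_{\zetaG}$ or $\log_v\diam_{\zetaG}$). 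The key point is that this correction is a \emph{coboundary}: it has the form $h - h\circ\phi$.

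Then the proof reduces to showing that the integral of a coboundary against the invariant measure $\mu_\phi$ vanishes. Taking logarithms in the extended relation $[\phi^\#]_z = [\phi']_z\cdot(\text{correction})$ gives $\log_v[\phi^\#]_z = \log_v[\phi']_z + h(z) - h(\phi(z))$ off a small set, and integrating against $\mu_\phi$ yields $L_v(\phi) = \hat{L}_v(\phi) + \int h\, d\mu_\phi - \int h\circ\phi\, d\mu_\phi$. Since $\mu_\phi$ is $\phi$-invariant ($\phi_*\mu_\phi = \mu_\phi$), the two integrals $\int h\, d\mu_\phi$ and $\int h\circ\phi\, d\mu_\phi = \int h\, d(\phi_*\mu_\phi)$ are equal, provided $h\in L^1(\mu_\phi)$ — which holds because $h$ is (a multiple of) a potential $\langle\cdot,a\rangle_\zeta$, and such functions lie in $L^1(\mu_\phi)$ by \cite{FRLErgodic} Lemma 4.3, as already invoked in the excerpt for $\log_v[\phi']$. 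Hence $L_v(\phi) = \hat{L}_v(\phi)$.

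Alternatively — and this may be cleaner given that the excerpt has just set up the time-average machinery $\overline{L}_v(\phi)(z)$ and $\overline{\hat L}_v(\phi)(z)$ — I would run the argument at the level of Birkhoff averages. For $\mu_\phi$-a.e. $z$, summing $\log_v[\phi^\#]_{\phi^k(z)} = \log_v[\phi']_{\phi^k(z)} + h(\phi^k(z)) - h(\phi^{k+1}(z))$ over $k=0,\dots,n$ telescopes, giving $\sum_{k=0}^n \log_v[\phi^\#]_{\phi^k(z)} = \sum_{k=0}^n \log_v[\phi']_{\phi^k(z)} + h(z) - h(\phi^{n+1}(z))$. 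Dividing by $n$ and letting $n\to\infty$, the boundary term $\frac{1}{n}(h(z) - h(\phi^{n+1}(z)))$ tends to $0$ for a.e.\ $z$ — here one uses that $\frac{1}{n}h(\phi^{n+1}(z))\to 0$ a.e., which follows from $h\in L^1(\mu_\phi)$ and the Birkhoff/Borel–Cantelli-type estimate that $\frac{1}{n}f(\phi^n(z))\to 0$ a.e.\ whenever $f\in L^1$ of an invariant measure. This yields $\overline{L}_v(\phi)(z) = \overline{\hat L}_v(\phi)(z)$ a.e., hence $L_v(\phi) = \hat{L}_v(\phi)$.

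The main obstacle in either route is the bookkeeping around the exceptional locus and integrability: the identity $\phi^\# = |\phi'|\cdot(\text{correction})$ only holds on $\PP^1(K)$ away from poles (and its continuous extension must be justified at the relevant Berkovich points), and one must confirm that the correction term $h$ is genuinely a finite $\ZZ$-linear combination of potentials $\langle\cdot,a_i\rangle_\zeta$ with $a_i\in\PP^1(K)$ — essentially $a_i=\infty$ and its preimages — so that the $L^1(\mu_\phi)$ claim and the cancellation of the two integrals are both legitimate, and that $\mu_\phi$ assigns no mass to the finite exceptional set where the pointwise identity fails (which holds since $\mu_\phi$ never charges type I points and charges at most one type II point). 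None of these is deep, but they are where care is needed; I expect the cleanest writeup to be the direct coboundary argument using $\phi_*\mu_\phi=\mu_\phi$ and citing the already-used $L^1$-membership of potentials.
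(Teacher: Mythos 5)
Your proposal is correct, and both routes you sketch are genuinely different from the paper's argument. The paper does \emph{not} invoke the $L^1(\mu_\phi)$-membership of the correction term directly. Instead it first applies Lemmas~\ref{lem:coordchangeok} and~\ref{lem:otherLyapcoordinvt} to change coordinates so that $\infty$ lies in the Fatou set and $\mathcal{J}(\phi)$ is contained in a ball $\pberk\setminus B_{\vv_\infty}(\zeta_{0,R})$; then the correction factor $\max(1,[T]_z)^2/\max(1,[\phi]_z)^2$ is \emph{uniformly bounded} on $\mathcal{J}(\phi)$, and, because the chain rule telescopes the correction across iterates (leaving only a term at $z$ and one at $\phi^n(z)$), the difference between $\frac1n\log_v[(\phi^n)^\#]_z$ and $\frac1n\log_v[(\phi^n)']_z$ is $O(1)/n$ at a Birkhoff-generic $z_*\in\supp(\mu_\phi)$, so the two time-averages coincide. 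Your second route is essentially this same telescoping written out explicitly, with the boundary term controlled by $\frac1n h(\phi^{n+1}(z))\to 0$ a.e.\ from $h\in L^1(\mu_\phi)$ rather than from uniform boundedness; your first route is the cleanest, observing that $h(z) = 2\log_v\max(1,[T]_z) = 2\langle z,\infty\rangle_{\zetaG}$ so $\log_v[\phi^\#] - \log_v[\phi'] = h - h\circ\phi$ is a genuine $L^1$-coboundary whose integral against the $\phi$-invariant $\mu_\phi$ vanishes outright. The trade-off is mild: the paper's route needs the two coordinate-invariance lemmas (which it proves anyway for independent use) but avoids any appeal to integrability of the correction, while your direct cancellation avoids the coordinate change at the cost of citing \cite{FRLErgodic} Lemma~4.3 for $\langle\cdot,\infty\rangle_{\zetaG}\in L^1(\mu_\phi)$ — a citation the paper already uses in Section~\ref{sect:propLyap} to justify $\log_v[\phi']\in L^1(\mu_\phi)$, so you are not importing anything new. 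Your own cautionary remarks about where the pointwise identity holds (away from $\infty$ and poles, a set of full $\mu_\phi$-measure since $\mu_\phi$ never charges type I points) are exactly the right bookkeeping.
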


\begin{proof}
Fix a coordinate system so that $\infty$ is in the Fatou set for $\phi$; over a non-Archimedean field $K$, the Fatou set always contains type I points (\cite{Ben}, Corollary 1.3), so that this is always possible. By the above lemmas, both $L_v(\phi)$ and $\hat{L}_v(\phi)$ are unaffected by this change of coordinates. 

We may choose $\zeta_{0,R}$ for $R$ sufficiently large so that the Julia set of $\phi$ lies in $V=\pberk \setminus B_{\vv_\infty}(\zeta_{0,R})$.
By the Birkhoff ergodic theorem, we can find $z_*\in \supp(\mu_\phi)$ with $$L_v(\phi) = \overline{L}_v(\phi)(z_*)\ ;\ \hat{L}_v(\phi) = \overline{\hat{L}}_v(\phi)(z_*)\ .$$ We will show that $\overline{L}_v(\phi)(z_*) = \overline{\hat{L}}_v(\phi)(z_*)$ for such $z_*$. 

Recall from (\ref{eq:gsharpgprime}) that when $z$ is not a pole of $\phi$, $$\phi^\#(z) = \frac{\max(1, |z|_v)^2}{\max(1, |\phi(z)|_v)^2}\cdot |\phi'(z)|_v\ .$$ This extends continuously to $\pberk\setminus \{\infty, \text{poles of }\phi\}$ as $$[\phi^\#]_z = \frac{\max(1, [T]_z)^2}{\max(1, [\phi]_z)^2}\cdot [\phi']_z\ .$$ Since we are assuming that $\mathcal{J}(\phi)\subseteq \pberk \setminus B_{\vv_\infty}(\zeta_{0,R})$, we can find constants $B_1, B_2$ so that $$B_1 \leq \max(1, [T]_z)^2 \leq B_2$$ for every $z\in \mathcal{J}(\phi)$. By the forward invariance of $\mathcal{J}(\phi)$, this gives that \begin{align}\label{eq:lyapindepbound}\frac{1}{B_2} \leq \frac{1}{\max(1, [T]_{\phi^n(z)})^2} \leq \frac{1}{B_1} \end{align} for every $n$ and for every $z\in \mathcal{J}(\phi)$ (recall that $\mathcal{J}(\phi) \subseteq \pberk \setminus B_{\vv_\infty}(\zeta_{0,R})$ implies that $\phi$ does not have poles in $\mathcal{J}(\phi)$).

By the chain rule, Birkhoff's theorem can be rewritten as $$\overline{L}_v(\phi)(z) = \lim_{n\to\infty} \frac{1}{n} \log_v[(\phi^n)^\#]_z\ ,$$ and similarly for $\overline{\hat{L}}(\phi)(z)$. Since $$\log_v\left(\frac{B_1}{B_2}\right)+\log_v[(\phi^n)']_z \leq \log_v[(\phi^n)^\#]_z \leq  \log_v\left(\frac{B_2}{B_1}\right) + \log_v[(\phi^n)']_z\ ,$$ it follows that $\overline{L}_v(\phi)(z_*) = \overline{\hat{L}}_v(\phi)(z_*)$ as desired.
\end{proof}

\section{Lower Bound on $L_v(\phi)$}\label{sect:mainproof}
At the end of this section we prove Theorem~\ref{thm:lowerboundonLyap}. The proof will make use of a notion of distortion given in \cite{BIJL}, which we recall now.
\subsection{Distortion}
 Given a map $\phi \in K(z)$ and a point $\zeta\in \hberk$, let the distortion of $\phi$ at $\zeta$ be the quantity (see \cite{BIJL}) $$\hat{\delta}(\phi, \zeta): = \log_v \diam_\infty(\zeta) +\log_v [\phi']_\zeta - \log_v [\phi]_\zeta\ .$$ Rearranging this gives 
\begin{equation}\label{eq:logderdistortion}\log_v [\phi']_\zeta = \hat{\delta}(\phi, \zeta) +\log_v[\phi]_\zeta - \log_v \diam_\infty(\zeta)\ .\end{equation}
The following lemma shows that the distortion transforms nicely with respect to pre-composition with elements of $\PGL_2(K)$:
\begin{lemma}\label{lem:preconjugation}
For any $\gamma \in \PGL_2(K)$ and any $\zeta\in \hberk$, we have \begin{equation}\label{eq:deltatransform}\hat{\delta}(\phi\circ \gamma, \zeta) = \hat{\delta}(\phi, \gamma(\zeta))\ .\end{equation}
\end{lemma}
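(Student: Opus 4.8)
The plan is to expand both sides of \eqref{eq:deltatransform} using the definition of $\hat{\delta}$ and the chain rule for the ordinary derivative, and then to check that the various terms match after accounting for how $\diam_\infty$ and the seminorm $[\,\cdot\,]_\zeta$ behave. Writing $\psi = \phi\circ\gamma$, the definition gives
\[
\hat{\delta}(\phi\circ\gamma, \zeta) = \log_v \diam_\infty(\zeta) + \log_v[(\phi\circ\gamma)']_\zeta - \log_v[\phi\circ\gamma]_\zeta\ .
\]
The middle term splits by the chain rule (extended to $\pberk$ by continuity, as in the proof of Lemma~\ref{lem:sphericalchainrule}) as $\log_v[\phi'\circ\gamma]_\zeta + \log_v[\gamma']_\zeta = \log_v[\phi']_{\gamma(\zeta)} + \log_v[\gamma']_\zeta$, and the last term is $\log_v[\phi]_{\gamma(\zeta)}$. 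So the task reduces to showing
\[
\log_v\diam_\infty(\zeta) + \log_v[\gamma']_\zeta = \log_v\diam_\infty(\gamma(\zeta))\ ,
\]
i.e. that the ordinary derivative $[\gamma']_\zeta$ measures exactly the ratio of diameters $\diam_\infty(\gamma(\zeta))/\diam_\infty(\zeta)$.

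First I would reduce to the case where $\zeta$ is a point of type II or III of the form $\zeta_{a,r}$, since those are dense in $\hberk$ in the strong topology and both sides of the desired identity are continuous in $\zeta$ for that topology (the diameter $\diam_\infty = \delta(\cdot,\cdot)_\infty$ is strongly continuous, and $[\gamma']_\zeta$ likewise, being a product of Hsia-kernel-type factors); type IV points then follow by a limiting argument. For $\zeta = \zeta_{a,r}$ one has $\diam_\infty(\zeta_{a,r}) = r$. Then I would use the standard fact that a Möbius transformation $\gamma$ sends the disc $D(a,r)$ to a disc $D(\gamma(a), s)$ (when $\infty\notin D(a,r)$ and $\gamma$ has no pole there) with $s = |\gamma'(a)|_v\, r$ when $r$ is small enough that $\gamma$ is "affine-like" on $D(a,r)$ — but for a general type II/III point this is not a smallness statement, it is exact: writing $\gamma(z) = (az+b)/(cz+d)$, a direct computation of $\sup_{z\in D(a,r)}$ of the relevant seminorm shows the image disc has radius $s = \dfrac{r\,|ad-bc|_v}{\max(|ca+d|_v, |c|_v r)^2}$, and this is precisely $[\gamma']_{\zeta_{a,r}}\cdot r$ because $[\gamma']_{\zeta_{a,r}}$ is the value at $\zeta_{a,r}$ of the seminorm attached to the rational function $\gamma'(z) = (ad-bc)/(cz+d)^2$, namely $|ad-bc|_v / \max(|ca+d|_v, |c|_v r)^2$. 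Matching these gives the identity.

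Alternatively — and this is probably cleaner to write — I would avoid coordinates by invoking the relation between $\log_v[\phi']_\zeta$ and Hsia kernels already used in Section~\ref{sect:propLyap}: for $\gamma\in\PGL_2(K)$ with pole $p$ and with $\gamma(p') = \infty$, one has a decomposition $\log_v[\gamma']_\zeta = \log_v|ad-bc|_v - 2\langle \zeta, p\rangle_{\zetaG} + (\text{const})$, and similarly $\log_v\diam_\infty(\gamma(\zeta)) = \log_v\delta(\gamma(\zeta),\gamma(\zeta))_\infty$ can be expanded via the change-of-variables formula for the Hsia kernel under $\gamma$ (\cite{BR} Equation (4.29)), which is exactly an identity of the shape $\delta(\gamma x,\gamma y)_\infty = \delta(x,y)_p \cdot (\text{factors depending on } x,y \text{ through } \|\cdot,p\|)$; combining the two expansions yields $\log_v\diam_\infty(\gamma(\zeta)) = \log_v\diam_\infty(\zeta) + \log_v[\gamma']_\zeta$ directly. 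The main obstacle is bookkeeping: keeping the additive constants and the pole/zero locations of $\gamma$ straight, and handling the degenerate case where $\gamma$ is affine (no finite pole) separately, where the formula collapses to $[\gamma']_\zeta = |a/d|_v$ constant and $\diam_\infty(\gamma(\zeta)) = |a/d|_v\diam_\infty(\zeta)$, which is immediate. Once the coordinate computation is organized, there is no real difficulty; the content is entirely the exact image-disc radius formula for Möbius maps.
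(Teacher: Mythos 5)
Your proof is correct and takes a genuinely different (and somewhat cleaner) route than the paper's. You and the paper perform the identical initial reduction: expand $\hat{\delta}(\phi\circ\gamma,\zeta)$ via the chain rule and the rule $[\phi'\circ\gamma]_\zeta = [\phi']_{\gamma(\zeta)}$, and observe that the claim collapses to the scalar identity $\log_v\diam_\infty(\gamma(\zeta)) = \log_v\diam_\infty(\zeta) + \log_v[\gamma']_\zeta$. Where you part ways is in how that identity is proved. The paper decomposes $\gamma$ into generators (translations, scalings, inversion) and verifies the identity separately for each, with a further sub-case split for inversion depending on whether the center satisfies $|a|_v>r$ or not. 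You instead prove the identity in one stroke for an arbitrary M\"obius map by writing down the exact image-disc radius formula $s = |ad-bc|_v\,r / \max(|cw+d|_v,|c|_v r)^2$ and recognizing the right-hand side as $[\gamma']_{\zeta_{w,r}}\cdot r$; the $\max$ in the denominator automatically absorbs the paper's case distinction about whether the pole of $\gamma$ lies in $D(w,r)$. This is more unified and avoids a reduction to generators, at the modest cost of having to justify the image-radius formula in the pole-in-disc case, where $\gamma(D(w,r))$ is set-theoretically not a disc centered at $\gamma(w)$ (so the phrase ``sends $D(a,r)$ to $D(\gamma(a),s)$'' should really be ``$\gamma(\zeta_{w,r})$ has diameter $s$''; the formula remains valid). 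Both proofs reduce to type II/III points and extend to type IV by strong-topology density, and your alternative route via the Hsia kernel change-of-variables formula would also work. One small bookkeeping warning: you overload the symbol $a$ as both a matrix entry of $\gamma$ and the disc center, which should be disambiguated before this is written up.
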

\begin{proof}
The group $\PGL_2(K)$ can be generated by transformations of the form $\gamma(z) = z+a$, $\sigma(z) = az$, and $\tau(z) = \frac{1}{z}$. We will show that (\ref{eq:deltatransform}) holds for each of these types of transformations. 

Suppose first that $\zeta\in\hberk$ is either of type II or III. Given an element  $\gamma\in \PGL_2(K)$, we have
\begin{align}
\hat{\delta}(\phi\circ\gamma, \zeta) &= \log_v \diam_\infty(\zeta) + \log_v [(\phi\circ \gamma)']_\zeta - \log_v [\phi\circ\gamma]_\zeta\nonumber\\
& = \log_v \diam_\infty(\zeta) + \log_v [\phi']_{\gamma(\zeta)} + \log_v [\gamma']_\zeta - \log_v [\phi]_{\gamma(\zeta)}\label{eq:deltabreakdown}\ .
\end{align}

We consider three cases:
\begin{itemize}

\item If $\gamma(z) = z+a$ for some $a\in K$, then $\gamma'(z) = 1$ and the term $\log_v[\gamma']_\zeta$ in (\ref{eq:deltabreakdown}) is zero. Since $\gamma$ is simply a translation, we also have $\diam_\infty(\zeta) = \diam_\infty(\gamma(\zeta))$; inserting this into (\ref{eq:deltabreakdown}) gives $$\hat{\delta}(\phi\circ \gamma, \zeta) = \log_v \diam_\infty (\gamma(\zeta)) + \log_v [\phi']_{\gamma(\zeta)} - \log_v [\phi]_{\gamma(\zeta)} = \hat{\delta}(\phi, \gamma(\zeta))\ .$$

\item If $\sigma(z) = az$ for some non-zero $a\in K$, then $\log_v [\sigma']_{\zeta} = \log_v |a|_v$ and 
\[
\log_v \diam_{\infty}(\sigma(\zeta)) = \log_v \left( |a|_v\cdot \diam_{\infty} (\zeta)\right) = \log_v |a|_v + \log_v \diam_\infty(\zeta)\ .
\] Inserting these into (\ref{eq:deltabreakdown}) gives
\begin{align*}
\hat{\delta}(\phi\circ\sigma, \zeta) &=  \log_v \diam_\infty(\zeta) + \log_v [\phi']_{\sigma(\zeta)} + \log_v |a|_v - \log_v [\phi]_{\sigma(\zeta)}\\
& = \log_v \diam_{\infty}(\sigma(\zeta)) + \log_v [\phi']_{\sigma(\zeta)} -\log_v [\phi]_{\sigma(\zeta)} = \hat{\delta}(\phi, \sigma(\zeta))\ .
\end{align*}

\item Finally, if $\tau(z) = \frac{1}{z}$, note that $\tau'(z) = -(\tau(z))^2$; hence (\ref{eq:deltabreakdown}) becomes 
\begin{equation}\label{eq:deltabreakdownspecial}
\hat{\delta}(\phi\circ \tau, \zeta) = \log_v \diam_\infty(\zeta) + \log_v [\phi']_{\tau(\zeta)} + 2\log_v [\tau]_\zeta - \log_v [\phi]_{\tau(\zeta)}
\end{equation} Write $\zeta = \zeta_{a,r}$. We have two subcases:
\begin{itemize}
\item Suppose that $|a|_v > r$. Then $\tau(D(a,r)) = D\left(\frac{1}{a}, \frac{r}{|a|_v^2}\right)$. By \cite{BR} Corollary 4.2 and our assumption on $|a|_v$, we have
\begin{align*}
[\tau]_{\zeta} = \delta(\tau(\zeta), 0)_\infty &= \max\left(\frac{1}{|a|_v}, \frac{r}{|a|_v^2}\right)\\
& = \frac{1}{|a|_v} \max \left(1, \frac{r}{|a|_v}\right)\\
& = \frac{1}{|a|_v}\ .
\end{align*} Further, $\diam_\infty (\tau(\zeta)) = \frac{r}{|a|_v^2} = \frac{1}{|a|_v^2} \diam_\infty(\zeta)$. Inserting these into (\ref{eq:deltabreakdownspecial}) gives
$$ \hat{\delta}(\phi\circ \tau, \zeta) = \log_v \diam_{\infty}(\zeta) + \log_v [\phi']_{\tau(\zeta)} - 2\log_v |a|_v - \log_v [\phi]_{\tau(\zeta)} = \hat{\delta}(\phi, \tau(\zeta))\ .$$

\item Suppose that $|a|_v \leq r$. Then $\zeta = \zeta_{0, r}$, and hence $\tau(\zeta) = \zeta_{0, \frac{1}{r}}$ (see, e.g. \cite{BR} Lemma 2.4). This implies $[\tau]_\zeta = \delta(\tau(\zeta), 0)_\infty = \frac{1}{r}$. We also have that $\diam_{\infty}(\tau(\zeta)) = \frac{1}{r} = \frac{1}{\diam_\infty(\zeta)}$. Inserting these into (\ref{eq:deltabreakdownspecial}) gives 
$$\hat{\delta}(\phi \circ \tau, \zeta) = \log_v \diam_{\infty}(\zeta) + \log_v [\phi']_{\tau(\zeta)} - 2\log_v r - \log_v [\phi]_{\tau(\zeta)} = \hat{\delta}(\phi, \tau(\gamma))\ .$$
\end{itemize}

Note that the functions $\log_v \diam_\infty(\zeta), \log_v [\phi]_\zeta,$ and $\log_v [\phi']_\zeta$ are continuous with respect to the strong metric on $\hberk$; consequently the asserted equality holds also for type IV points as well, since type II and type III points are dense in $\hberk$.

\end{itemize}

\end{proof}

We can apply the previous lemma to show that, in the case that $\textrm{char}(K) = 0$, the distortion is essentially bounded below by $\kappa = \log_v \min( |i|_v\ : \ 1\leq i \leq d)$. This was noted already in \cite{BIJL} Remark 3.4 under certain hypotheses on $\zeta$ and $\phi$; we identify an error term that will account for the cases where the hypotheses are not met, and use this to establish the following lower bound on the log of the derivative:
\begin{lemma}\label{lem:hatdeltabound}
Assume $\textrm{char}(K)=0$. Fix $\zeta\in \hberk$, and let $\kappa = \log_v \min(|i|_v\ : \ 1\leq i \leq d)$. Then for any $\zeta\in \hberk$, we find 
\begin{equation}\label{eq:logphiepsilonbound}\log_v [\phi']_\zeta \geq \kappa  +\log_v \diam_\infty(\phi(\zeta)) - \log_v \diam_\infty(\zeta)\ .\end{equation} 
\end{lemma}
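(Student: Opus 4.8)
The plan is to reduce \eqref{eq:logphiepsilonbound} to an inequality about the \emph{distortion} $\hat\delta(\phi,\zeta)$ and then prove a lower bound on the distortion using the structure theory of rational maps acting on $\pberk$. Recalling \eqref{eq:logderdistortion}, namely $\log_v[\phi']_\zeta = \hat\delta(\phi,\zeta) + \log_v[\phi]_\zeta - \log_v\diam_\infty(\zeta)$, and observing that $\log_v[\phi]_\zeta = \log_v\delta(\phi(\zeta),0)_\infty$ while $\log_v\diam_\infty(\phi(\zeta)) = \log_v\delta(\phi(\zeta),\phi(\zeta))_\infty$, the target inequality is equivalent to
\[
\hat\delta(\phi,\zeta) \;\geq\; \kappa + \log_v\diam_\infty(\phi(\zeta)) - \log_v[\phi]_\zeta \;=\; \kappa + \log_v\frac{\delta(\phi(\zeta),\phi(\zeta))_\infty}{\delta(\phi(\zeta),0)_\infty}\ .
\]
One checks from the decomposition $\diam_\zeta(x) = \|x,x\|/\|x,\zeta\|^2$ that the right side is at most $\kappa$ when $\infty\notin B_{\vec v}(\phi(\zeta))^-$ for the relevant direction, and in general is bounded by $\kappa$ plus a controlled nonpositive correction; the cleanest route, though, is to use the $\PGL_2$-equivariance from Lemma~\ref{lem:preconjugation} to move the problem to a convenient coordinate. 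By Lemma~\ref{lem:preconjugation} we have $\hat\delta(\phi\circ\gamma,\zeta) = \hat\delta(\phi,\gamma(\zeta))$, so after choosing $\gamma\in\PGL_2(K)$ with $\gamma(\zetaG)=\zeta$ it suffices to bound $\hat\delta(\psi,\zetaG)$ from below for an arbitrary rational map $\psi=\phi\circ\gamma$, provided we also account for how $\log_v\diam_\infty$ and $\log_v[\phi]_\zeta$ change — which is exactly the bookkeeping done in the proof of Lemma~\ref{lem:preconjugation}. So I would first establish the statement for $\zeta=\zetaG$ and then transport it.

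For $\zeta=\zetaG$ the distortion becomes $\hat\delta(\psi,\zetaG) = \log_v[\psi']_{\zetaG} - \log_v[\psi]_{\zetaG}$ (since $\diam_\infty(\zetaG)=1$). Write $\psi = f/g$ with $f,g\in\mathcal O[T]$ coprime and at least one coefficient a unit, i.e.\ $[\psi]_{\zetaG}=\max([f]_{\zetaG},[g]_{\zetaG})$; after a further rotation in $\PGL_2(\mathcal O)$ (which fixes $\zetaG$ and changes nothing relevant) we may assume $[g]_{\zetaG}=1$, i.e.\ $g$ is primitive, and $[\psi]_{\zetaG}=\max(1,[f]_{\zetaG})$. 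Then $\psi' = (f'g - fg')/g^2$, so $[\psi']_{\zetaG} = [f'g-fg']_{\zetaG}/[g]_{\zetaG}^2 = [f'g-fg']_{\zetaG}$. The key estimate is a lower bound on the Gauss norm of the Wronskian-type polynomial $f'g-fg'$: its reduction modulo $\mathfrak m$ is $\overline{f'}\,\overline g - \overline f\,\overline{g'} = (\overline f\,'\,\overline g - \overline f\,\overline g\,')$, the Wronskian of the reductions. In characteristic $0$, differentiating a polynomial multiplies the coefficient of $T^m$ by $m$, so the coefficients of $f',g'$ are divisible at worst by $\min(|m|_v : 1\le m\le d)$, i.e.\ $[f']_{\zetaG}, [g']_{\zetaG}$ are $\geq q_v^{-\kappa}$... — more to the point, $f'g - fg'$ has all coefficients in $q_v^{\kappa}\mathcal O$ and the content argument shows $[f'g-fg']_{\zetaG}\geq q_v^{\kappa}$, equivalently $\log_v[\psi']_{\zetaG}\geq \kappa$, \emph{unless} the reduced Wronskian vanishes, i.e.\ $\tilde\psi$ is constant or inseparable. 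When $\tilde\psi$ is nonconstant this gives $\hat\delta(\psi,\zetaG)\geq\kappa - \log_v[\psi]_{\zetaG}$, and since $[\psi]_{\zetaG}\leq 1$ (as $f,g$ are now in $\mathcal O[T]$) we are done in that range; the cases where $[\psi]_{\zetaG}$ could exceed $1$ are precisely handled by the coordinate change already. When $\tilde\psi$ \emph{is} constant — i.e.\ $\zetaG$ is not in the ``good reduction locus'' for $\psi$ — one instead uses the surplus/directional multiplicity description from Section~\ref{sect:pullbacks}: $\psi$ maps $B_{\vec v}(\zetaG)^-$ with directional multiplicity $m_\psi(\zetaG,\vec v)\leq d$, and on the algebra of the residue disc the induced map on the relevant formal parameter is a rational function of degree $m_\psi(\zetaG,\vec v)$ whose leading behavior gives $\log_v[\psi']_{\zetaG} - \log_v[\psi]_{\zetaG} \geq \log_v|m_\psi(\zetaG,\vec v)|_v \geq \kappa$; this is the reduction used in \cite{BIJL} Remark 3.4, and the ``error term'' the paper refers to is exactly the discrepancy $\log_v[\psi]_{\zetaG} \leq 0$ that one picks up, which is harmless for a \emph{lower} bound.

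The main obstacle, and the place requiring real care rather than routine computation, is the case where the reduction $\tilde\psi$ is constant (so the naive Wronskian-content argument degenerates) or inseparable; there one must genuinely invoke the local analysis of $\phi$ on a residue annulus/disc — extracting the right ``leading coefficient'' for the action of $\psi$ on the canonical parameter of the ball $B_{\vec v}(\zeta)^-$ — and check that the multiplicity appearing is the directional multiplicity $m_\phi(\zeta,\vec v)\in\{1,\dots,d\}$, so that its valuation is $\geq\kappa$ by definition of $\kappa$. A secondary but routine point is keeping the sign of the correction $\log_v\diam_\infty(\phi(\zeta)) - \log_v[\phi]_\zeta$ straight: one must verify $\diam_\infty(\phi(\zeta))\leq [\phi]_\zeta$ always (which follows from $\|x,x\|\leq\|x,\zetaG\|$ and the decomposition $\diam_\infty(x)=\|x,x\|/\|x,\infty\|^2$ combined with $\|x,\infty\|\geq\|x,\zetaG\|$), so that passing from the distortion bound $\hat\delta(\phi,\zeta)\geq\kappa$ — valid when $\log_v[\phi]_\zeta - \log_v\diam_\infty(\zeta)$ contributes favorably — to \eqref{eq:logphiepsilonbound} only \emph{weakens} the inequality. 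Finally, continuity of $\log_v[\phi']_\zeta$, $\log_v[\phi]_\zeta$, and $\log_v\diam_\infty(\cdot)$ in the strong topology, together with density of type II points, extends the bound from type II points to all of $\hberk$, as in the last paragraph of the proof of Lemma~\ref{lem:preconjugation}.
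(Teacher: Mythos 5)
Your plan to reduce to a distortion estimate via \eqref{eq:logderdistortion} and then work locally at $\zetaG$ is a reasonable way in, and you correctly flag the constant/inseparable reduction case as the subtle one, but the intermediate inequality you aim for is false and the normalization step used to control $[\psi]_{\zetaG}$ does not do what you want.

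After pre-composing by $\gamma\in\PGL_2(K)$ with $\gamma(\zetaG)=\zeta$ and setting $\psi=\phi\circ\gamma$, the target at $\zetaG$ becomes $\log_v[\psi']_{\zetaG}\geq\kappa+\log_v\diam_\infty(\psi(\zetaG))$; the $\log_v[\psi]_{\zetaG}$ terms cancel, so what one actually needs is this two-term bound, not $\hat\delta(\psi,\zetaG)\geq\kappa$ or $\log_v[\psi']_{\zetaG}\geq\kappa$. You try to reduce to the case $[\psi]_{\zetaG}\leq 1$ by a ``rotation in $\PGL_2(\mathcal O)$,'' but pre-composing by any $\tau$ that fixes $\zetaG$ gives $[\psi\circ\tau]_{\zetaG}=[\psi]_{\tau(\zetaG)}=[\psi]_{\zetaG}$, so this has no effect (also $[\psi]_{\zetaG}=[f]_{\zetaG}/[g]_{\zetaG}$ by multiplicativity, not $\max([f]_{\zetaG},[g]_{\zetaG})$, and the asserted bound ``$[f']_{\zetaG}\geq q_v^{-\kappa}$'' has the wrong sign). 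More importantly, $\log_v[\psi']_{\zetaG}\geq\kappa$ is false even when $[\psi]_{\zetaG}\leq 1$: take $\psi(z)=p^kz^p$ over $\mathbb{C}_p$ with $k\geq 1$, so $[\psi]_{\zetaG}=|p|^k<1$ while $\log_v[\psi']_{\zetaG}=(k+1)\log_v|p|<\log_v|p|=\kappa$; here $\diam_\infty(\psi(\zetaG))=|p|^k$ and the lemma holds with equality, so the failure is not slack. Likewise your claim $\hat\delta(\psi,\zetaG)\geq\kappa$ in the constant-reduction case fails for $\psi(z)=(z+c)^2$ over $\mathbb{C}_2$ with $|c|>1$, where a direct computation gives $\hat\delta(\psi,\zetaG)=\log_v|2|-\log_v|c|<\kappa$.

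The ingredient your sketch is missing, and what the paper does, is a well-chosen \emph{post}-composition by a translation. Fix a center $a$ for $\zeta=\zeta_{a,r}$ chosen so that $D(a,r)^-$ contains no pole of $\phi$, and set $b=\phi(a)$. Replacing $\phi$ by $\phi-b$ leaves $[\phi']_\zeta$ and $\diam_\infty(\zeta)$ unchanged, but forces $\log_v[\phi-b]_\zeta=\log_v\diam_\infty(\phi(\zeta))$ exactly, so the correction term in \eqref{eq:logderdistortion} applied to $\phi-b$ vanishes identically and it suffices to show $\hat\delta(\phi-b,\zeta)\geq\kappa$. That clean bound is precisely \cite{BIJL} Lemma 3.3: $\hat\delta(\phi-b,\zeta)\geq\log_v|N_{\vv_a}^-(\phi-b,\zeta,\infty)-N_{\vv_a}^-(\phi-b,\zeta,0)|_v$; the choice of $a$ forces $N_{\vv_a}^-(\phi-b,\zeta,\infty)=0$, while $N_{\vv_a}^-(\phi-b,\zeta,0)=N_{\vv_a}^-(\phi,\zeta,b)\in\{1,\dots,d\}$, giving $\geq\kappa$ with no case analysis on reduction type at all. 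Your closing continuity argument extending from type II/III to type IV points is correct and matches the paper.
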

\begin{proof}

Assume $\zeta$ is of type II or III, and write $\zeta = \zeta_{a,r}$, where the center $a$ is chosen so that the open disc $D(a,r)^-$ does not contain any poles of $\phi$. Let $b=\phi(a)$, and write $\phi(\zeta) = \zeta_{b,s}$ for some $s>0$. We consider the map $\phi(z) - b$. 

Note that $N_{\vv_a}^-(\phi-b, \zeta, \infty) = 0$, as we chose $a$ so that $D(a,r)^-$ contains no poles of $\phi$. We also find $N_{\vv_a}^-(\phi-b, \zeta, 0) = N_{\vv_a}^-(\phi, \zeta, b)\in \{1,2,...,d\}$. Then by \cite{BIJL} Lemma 3.3, we find
\begin{align*}
\hat{\delta}(\phi-b, \zeta) &\geq \log_v |N_{\vv_a}^-(\phi-b, \zeta, \infty) - N_{\vv_a}^-(\phi-b, \zeta, 0)|_v\\
& = \log_v |N_{\vv_a}^-(\phi, \zeta, b)|_v \geq \kappa\ .
\end{align*} Note also that 
\[
\log_v[\phi-b]_\zeta = \log_v \diam_\infty(\phi(\zeta))\ .
\] Consequently, we have
\begin{align*}
\log_v [\phi']_\zeta = \log_v [(\phi-b)']_\zeta &= \hat{\delta}(\phi-b, \zeta) + \log [ \phi-b]_\zeta - \log_v \diam_\infty(\zeta)\\
& \geq \kappa + \log_v \diam_\infty(\phi(\zeta)) - \log_v \diam_\infty(\zeta)\ 
\end{align*}as asserted.

Now observe that the functions $\log_v [\phi'], \log_v \diam_\infty(\cdot), $ and $\log_v \diam_\infty(\phi(\cdot))$ are all continuous with respect to the strong topology; consequently, the asserted inequality extends to type IV points as well, since the type II and type III points are dense in $\hberk$ with respect to the strong topology.

\end{proof}

The idea now is to integrate both sides of (\ref{eq:logphiepsilonbound}) against the measure $\mu_\phi$ and use the $\phi$-invariance of $\mu_\phi$ to `cancel' the diameter terms appearing on the right side of the inequality. However, this will not work for general measures $\mu_\phi$: on the one hand, we do not know whether $\int_{\pberk} \log_v\diam_\infty(\zeta) d\mu_\phi$ is finite in general; on the other hand, the bounds in Lemma~\ref{lem:hatdeltabound} apply only when $\zeta\in \hberk$, and it is possible that $\mu_\phi$ is supported only at type I points! 

\subsection{Proof of Theorem~\ref{thm:lowerboundonLyapbdd}}\label{sect:pfthmtwo}
If $\supp(\mu_\phi)\subseteq \hberk$ and $\int_{\pberk} \log_v \diam_\infty(\zeta)d\mu_\phi(\zeta) < \infty$, we \textit{can} integrate both sides of (\ref{eq:logphiepsilonbound}) against $\mu_\phi$ to establish Theorem~\ref{thm:lowerboundonLyapbdd}: \begin{proof}[Proof of Theorem~\ref{thm:lowerboundonLyapbdd}]
Since we are assuming $\supp(\mu_\phi)= \mathcal{J}(\phi)\subseteq \hberk$, (\ref{eq:logphiepsilonbound}) holds for all $\zeta\in \supp(\mu_\phi)$. As we are also assuming that $\log_v\diam_\infty(\cdot)\in L^1(\mu_\phi)$, we may integrate both sides of (\ref{eq:logphiepsilonbound}) and using the $\phi$-invariance of $\mu_\phi$ gives
\begin{align*}
\int \log_v [\phi']_\zeta d\mu_\phi(\zeta) & \geq \kappa + \int\log_v \diam_\infty(\phi(\zeta)) d\mu_\phi - \int \log_v \diam_\infty(\zeta) d\mu_\phi\\
& = \kappa + \int \log_v \diam_\infty(\zeta) d\mu_\phi - \int \log_v \diam_\infty(\zeta) d\mu_\phi\\
& = \kappa\ .
\end{align*} This gives the asserted lower bound, since $L_v(\phi) = \hat{L}_v(\phi) = \int \log_v [\phi']_\zeta d\mu_\phi(\zeta) $.

\end{proof}

\noindent The following example shows that the lower bound obtained in Theorem~\ref{thm:lowerboundonLyapbdd} is sharp in some degrees $d$:\\

\noindent\emph{Example:} Fix a prime number $p$. Let $K=\mathbb{C}_p$ be the field of $p$-adic numbers, and let $\phi(z) = z^d$, where $d=ap^\ell$ for $a\in \{1, ...,p-1\}$ coprime to $p$, and $\ell>0$. Here, $$\kappa = \min\left(\log_v |m|_v\ : \ 1\leq m \leq ap^\ell\right) = \log_v |p^\ell|_v =-\ell <0\ .$$ The map $\phi$ has good reduction, hence the equilibrium measure for $\phi$ is $\mu_\phi = [\zetaG]$. Then $$\hat{L}(\phi) = \int \log_v [\phi']_z d\mu_\phi = \log_v [ap^\ell z^{d-1}]_{\zetaG} = \log_v (|ap^\ell|_v\cdot [z^{d-1}]_{\zetaG})\ .$$ Since $\psi(z) = z^{p-1}$ has non-constant reduction, we have $[z^{p-1}]_{\zetaG} = [z]_{\psi(\zetaG)} = [z]_{\zetaG} = 1$. Hence $$\hat{L}(\phi) = \log_v (|ap^\ell|_v\cdot [z^{p-1}]_{\zetaG}) = \log_v |p^\ell|_v=\kappa \ .$$ By Proposition~\ref{prop:equalLyapunovs} above, we saw that $L(\phi) = \hat{L}(\phi)$, which establishes the asserted sharpness.\\

However, it is worth noting that often this lower bound is not sharp, even for some maps of potential good reduction. When the degree $d> p$ is coprime to $p$, maps $\phi$ of potential good reduction satisfy $\hat{L}(\phi) = 0$ (see Proposition~\ref{prop:goodredlyapzero} below); but if $d$ is sufficiently large, then $\kappa \neq 0$, so that $\hat{L}(\phi) > \kappa$. 

\subsection{Equidistribution}\label{sect:eq}
The proof of Theorem~\ref{thm:lowerboundonLyap} will rely on two equidistribution theorems, which we formulate and prove here.

\begin{prop}\label{prop:wildintegrallowerbound}
Let $\nu_n = \frac{1}{d^n} (\phi^{n})^*[\zetaG]$ be the probability measure supported at the preimages of $\zetaG$ weighted with multiplicity. Let $\mathcal{L}_\phi$ be a Lipschitz constant for the action of the rational map $\phi$ on $\mathbb{P}^1(K)$ with respect to the spherical metric. Then for all $n\geq 1$, $$\int_{\pberk} \log_v \diam_{\infty}(\zeta) d(\nu_{n-1} - \nu_n)(\zeta) \geq -(d+1)\log_v \mathcal{L}_\phi\ .$$
\end{prop}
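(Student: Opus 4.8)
The plan is to express the signed measure $\nu_{n-1}-\nu_n$ as a Laplacian and then use the Laplacian pairing identity from Section~\ref{sect:Berkpotential} to rewrite the integral of $\log_v\diam_\infty(\cdot)$ against it. Recall that $\log_v\diam_\infty(\zeta) = \log_v\delta(\zeta,\zeta)_\infty$ can be written (up to the basepoint normalization) in terms of the potential kernel $\langle\cdot,\cdot\rangle_{\zeta}$ based at a point of $\hberk$; more precisely, on any subtree $\Gamma$ avoiding a neighborhood of $\infty$ the function $\zeta\mapsto\log_v\diam_\infty(\zeta)$ agrees with $\langle\zeta,\infty\rangle_{\zeta_0} + \text{const}$ for a suitable base point, hence lies in $\BDV$. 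So first I would fix a large exhausting subtree $\Gamma$ of $\hberk$ (chosen so that the supports of $\nu_{n-1},\nu_n$ and a suitable neighborhood of the relevant dynamics retract into it), and compute $\Delta_\Gamma$ of the potential function that represents $g_1$ or, more directly, use $\nu_{n-1}-\nu_n = \tfrac{1}{d^{n-1}}(\phi^{n-1})^*(\nu_0 - \nu_1) = \tfrac{1}{d^{n-1}}(\phi^{n-1})^*\Delta(-g_1(\cdot,\zetaG))$ together with $\phi^*\Delta f = \Delta(f\circ\phi)$. This turns the integral into $\int \log_v\diam_\infty\, d\Delta(h)$ for an explicit $h$ built from $g_1\circ\phi^{n-1}$, and then the symmetry of the Laplacian pairing moves the Laplacian onto $\log_v\diam_\infty(\cdot)$.

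Next I would identify $\Delta(\log_v\diam_\infty(\cdot))$ restricted to $\Gamma$: this is a signed measure supported on the branch points and endpoints of $\Gamma$, and along any edge the function $\log_v\diam_\infty$ changes with controlled slopes ($+1$ going toward $\infty$, constant on branches away from the segment to $\infty$), so its Laplacian is an explicit combination of Dirac masses with coefficients $0$ or $\pm1$ at the structural points, plus the boundary contribution near $\infty$. Pairing this with the potential $h$ (which is bounded, being a uniform limit of the $g_n$-type potentials, or at least controlled by $\|g_1\|_\infty/(1 - 1/d)$-type bounds) reduces the whole integral to a finite sum of values of $h$ at a few points, each bounded in absolute value by $\|g_1(\cdot,\zetaG)\|_\infty$. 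The final step is to bound $\|g_1(\cdot,\zetaG)\|_\infty$: since $g_1(\cdot,\zetaG) = \tfrac1d\sum_{\phi(\zeta_i)=\zetaG}\langle\cdot,\zeta_i\rangle_{\zetaG}$ and each $\langle\cdot,\zeta_i\rangle_{\zetaG}$ measures a distance in the big metric $\rho$ (equivalently $-\log_v$ of a spherical distance between points whose separation is governed by where the preimages $\zeta_i$ of $\zetaG$ sit), the Lipschitz continuity of $\phi$ in the spherical/small metric with constant $\mathcal{L}_\phi$ controls how far these preimages can be spread, yielding $\langle\cdot,\zeta_i\rangle_{\zetaG} \leq \log_v\mathcal{L}_\phi$ and hence the factor $(d+1)\log_v\mathcal{L}_\phi$ after accounting for the $d$ preimages plus the basepoint term.

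The main obstacle I anticipate is twofold: (i) justifying the manipulations with the Laplacian on the non-compact tree $\hberk$ — i.e. choosing the exhaustion $\{\Gamma_i\}$ correctly, checking that $\log_v\diam_\infty(\cdot)\in\BDV(\Gamma_i)$ with uniformly bounded differential variation, and verifying that the coherent-measure construction of $\Delta$ (which the paper deliberately set up for exactly this proof) lets us pass to the limit without losing mass or picking up a spurious contribution at $\infty$; and (ii) getting the constant exactly $(d+1)\log_v\mathcal{L}_\phi$ rather than something weaker — this requires a careful accounting of which structural points of $\Gamma$ actually receive nonzero Laplacian mass from $\log_v\diam_\infty$ and pairing each against the correct bound on the potential $h$, using that the preimages of $\zetaG$ under the Lipschitz map $\phi$ all lie within spherical distance $\mathcal{L}_\phi$ of their images. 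I would handle (i) by working first on each finite $\Gamma_i$ where everything is a finite sum, proving the inequality there with a constant independent of $i$, and then taking the limit; and (ii) by reducing, via $\phi^*\Delta f = \Delta(f\circ\phi)$ and induction on $n$, to the single-step estimate $\int\log_v\diam_\infty\,d(\nu_0-\nu_1)\geq -(d+1)\log_v\mathcal{L}_\phi$, which is the clean base case where the bound on $g_1$ is most transparent.
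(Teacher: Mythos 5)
Your high-level plan — write $\nu_{n-1}-\nu_n$ as a Laplacian, swap the Laplacian onto $\log_v\diam_\infty$ via the symmetric pairing, and bound the resulting sum using the $\sup$ of $g_1(\cdot,\zetaG)$ controlled by $\log_v\mathcal{L}_\phi$ — is exactly the paper's route. But two of the specifics you sketch are off in ways that would block the argument as written.

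First, $\log_v\diam_\infty(\cdot)$ is \emph{not} of the form $\langle\cdot,\infty\rangle_{\zeta_0}+C$ on a subtree. The kernel $\langle\cdot,\infty\rangle_{\zeta_0}$ is constant on branches off the segment $[\zeta_0,\infty]$, but $\log_v\diam_\infty$ drops with slope $1$ (in the big metric) on \emph{every} branch not pointing at $\infty$. Its Laplacian on a finite subtree $\Gamma$ is therefore not $[\infty]-[\zeta_0]$ but a full branching measure: $\Delta_\Gamma\log_v\diam_\infty=-2\mu_{\mathrm{Br},\Gamma}+2[r_\Gamma(\infty)]$, with $\mu_{\mathrm{Br},\Gamma}$ putting mass $\tfrac{1}{2}(2-v_\Gamma(P))$ at each $P\in\Gamma$. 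You need this identity (it is the paper's Equation~(\ref{eq:Deltalogdiam}), taken from \cite{KJ1}), because the constant in the final bound is created precisely by counting the endpoints of $\Gamma_n$.

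Second, the accounting that produces $(d+1)\log_v\mathcal{L}_\phi$ is different from what you describe. You pair the potential $h=g_{n-1}-g_n=-d^{-(n-1)}\,g_1(\phi^{n-1}(\cdot),\zetaG)$ against $\Delta_{\Gamma_n}\log_v\diam_\infty$. The relevant $h$ is \emph{not} bounded simply by $\|g_1\|_\infty$ or $\|g_1\|_\infty\cdot d/(d-1)$ and evaluated at ``$d$ preimages plus a basepoint''; rather $\|h\|_\infty\leq\|g_1\|_\infty/d^{n-1}$ (tiny for large $n$), and it is paired against a branching measure whose positive part is concentrated at the \emph{endpoints} of $\Gamma_n$, of which there are at most $d^n+d^{n-1}$ (one for each element of $\phi^{-n}(\zetaG)\cup\phi^{-(n-1)}(\zetaG)$). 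It is the cancellation $\frac{d^n+d^{n-1}}{d^{n-1}}=d+1$, together with the sign observations that $g_1\geq0$ and $\mu^+_{\mathrm{Br},\Gamma_n}\geq0$, that produces the uniform constant. Your mental model (``a few points'' with $h$ of size $\|g_1\|_\infty$) gets the right number by coincidence but would not survive being made rigorous for general $n$.

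Third, the proposed reduction to the case $n=1$ via $\phi^*\Delta f=\Delta(f\circ\phi)$ and induction does not go through cleanly. Pulling back the measure replaces the integrand by $(\phi^{n-1})_*\log_v\diam_\infty$, which is not (a multiple of) $\log_v\diam_\infty$, so you cannot simply cite the base case. The paper avoids this entirely by performing the computation uniformly in $n$ on the finite subtree $\Gamma_n$ and observing that all the growth in the number of support points is exactly compensated by the $d^{-(n-1)}$ in the potential. I would drop the induction and instead carry out the branching-measure computation directly on each $\Gamma_n$, as the paper does.
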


Before proving Proposition~\ref{prop:wildintegrallowerbound}, we need the following lemma:

\begin{lemma}\label{lem:lipg}
Fix a type II point $\zeta_0\in \hberk$. Choose $\gamma\in \PGL_2(K)$ with $\gamma(\zetaG) = \zeta_0$, and let $\mathcal{L}_{\phi^\gamma}$ denote the Lipschitz constant for the action of $\phi^\gamma$ on $\mathbb{P}^1(K)$ in the spherical metric. Let 
\begin{equation}\label{eq:g1} g_1(\cdot, \zeta_0) = \frac{1}{d} \sum_{\phi(\zeta_i) = \zeta_0} \langle \cdot, \zeta_i\rangle_{\zeta_0}\ ,\end{equation} which is a potential for the measure $\frac{1}{d}\phi^*[\zeta_0] - [\zeta_0]$. Then $$\sup_{z\in \pberk} |g_1(z, \zeta_0)| \leq \log_v \mathcal{L}_{\phi^\gamma}\ .$$
\end{lemma}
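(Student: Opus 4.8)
The statement bounds the potential $g_1(\cdot,\zeta_0)$ by the log of the Lipschitz constant of a conjugate $\phi^\gamma$ for which $\gamma(\zetaG)=\zeta_0$. The natural strategy is to reduce to the case $\zeta_0=\zetaG$ by conjugating, and then to estimate $g_1(\cdot,\zetaG)$ directly using the fact that the slope of $g_1$ is controlled by the multiplicity/surplus data of $\phi$ at $\zetaG$, which in turn is governed by the Lipschitz constant of the action on $\PP^1(K)$. First I would set up the conjugation: writing $\psi=\phi^\gamma$, one checks via the change-of-variables formula for the potential kernel (and the transitivity of $\PGL_2(K)$ on type II points together with the fact that $\PGL_2(K)$ acts by isometries for $\rho$ on appropriate pieces) that $g_1^\phi(z,\zeta_0) = g_1^\psi(\gamma^{-1}(z),\zetaG)$, so that $\sup_z|g_1^\phi(z,\zeta_0)| = \sup_w |g_1^\psi(w,\zetaG)|$. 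This reduces the lemma to the special case $\zeta_0=\zetaG$, $\gamma=\mathrm{id}$.

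Next, for the case $\zeta_0=\zetaG$, I would analyze $g_1(z,\zetaG) = \frac{1}{d}\sum_{\phi(\zeta_i)=\zetaG}\langle z,\zeta_i\rangle_{\zetaG}$. Each term $\langle z,\zeta_i\rangle_{\zetaG}$ is non-negative, increases with slope $1$ along $[\zetaG,\zeta_i]$, and is constant on branches off that segment; its maximum over $z\in\pberk$ is $\rho(\zetaG,\zeta_i) = -\log_v\diam_\infty(\zeta_i)$ if $\zeta_i$ is farther from $\zetaG$ than... more carefully, the sup of $\langle z,\zeta_i\rangle_{\zetaG}$ over $z$ is attained as $z$ runs out to $\zeta_i$ itself (or beyond), giving $\langle \zeta_i,\zeta_i\rangle_{\zetaG}$-type quantities — but since the $\zeta_i$ can be type I this is $+\infty$ pointwise, so one must instead bound the \emph{whole sum} $g_1(z,\zetaG)$ at a fixed $z$. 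The key observation is that at a fixed $z$, the contributions $\langle z,\zeta_i\rangle_{\zetaG}$ are each $\rho(\zetaG, z\wedge_{\zetaG}\zeta_i)$, and only those $\zeta_i$ lying in the same direction off $\zetaG$ as $z$ contribute more than $0$; grouping the $\zeta_i$ by the tangent direction $\vv\in T_{\zetaG}$ in which they lie, the number in direction $\vv$ is $m_\phi(\zetaG,\vv)+s_\phi(\zetaG,\vv)$ when $\zetaG\in B_{\phi_*\vv}(\phi(\zetaG))^- = B_{\phi_*\vv}(\zetaG)^-$ — i.e. this ties directly to the surplus multiplicity formalism recalled in the excerpt. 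Summing along the segment $[\zetaG,z]$ and using that the Laplacian mass of $g_1$ is $\frac{1}{d}\phi^*[\zetaG]-[\zetaG]$, one gets $g_1(z,\zetaG) = \frac{1}{d}\sum_{\vv}(\text{number of }\zeta_i\text{ in the direction leading to }z)\cdot(\text{length traversed})$, and the length each segment can be traversed is bounded by how far $\phi$ can "stretch" a ball at $\zetaG$, which is exactly $\log_v\mathcal{L}_\phi$ by definition of the Lipschitz constant for the spherical metric.

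**Main obstacle.** The hard part will be making the direction-by-direction counting argument precise and uniform in $z$: one has to identify, for each $z$, the subtree spanned by $\zetaG$ and the relevant preimages $\zeta_i$, and show the total "rise" of $g_1$ from $\zetaG$ out to $z$ telescopes into a quantity bounded by $\log_v\mathcal{L}_\phi$ regardless of where $z$ is. I expect this to come down to the following clean fact: if $B_{\vv}(\zetaG)^-$ contains $k$ preimages of $\zetaG$ (with multiplicity), then the segment of $[\zetaG, z]$ lying inside $B_{\vv}(\zetaG)^-$ along which $\langle\cdot,\zeta_i\rangle_{\zetaG}$ is still rising for those $k$ points has $\rho$-length at most $\log_v\mathcal{L}_{\phi}$, because $\phi$ maps that segment into the segment $[\zetaG,\phi(\zetaG)]=\{\zetaG\}$-side with a controlled expansion factor; combined with $\sum_{\vv}k_\vv = d$ (counting all $d$ preimages), the $\frac1d$ normalization yields the bound $\log_v\mathcal{L}_{\phi^\gamma}$ after undoing the conjugation. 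Care is also needed with signs and with the possibility that $\zeta_0$ (equivalently $\zetaG$ after conjugating) is itself one of the $\zeta_i$, i.e. $\phi$ fixes $\zetaG$, which corresponds precisely to good reduction and should make $g_1\equiv 0$ — a useful consistency check, since then $\mathcal{L}_{\phi^\gamma}=1$ and $\log_v\mathcal{L}_{\phi^\gamma}=0$.
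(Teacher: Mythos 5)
Your proposal goes astray at a crucial point: you worry that the preimages $\zeta_i$ of $\zeta_0$ might be type I, writing ``since the $\zeta_i$ can be type I this is $+\infty$ pointwise.'' This is false. The point $\zeta_0$ is assumed to be of type II, and a nonconstant rational map preserves the type of a point on $\pberk$ (this is stated in Section~\ref{sect:Berk}: ``the action of a rational map preserves the types of points''). Hence every $\zeta_i\in\phi^{-1}(\zeta_0)$ is again type II, so each $\langle\cdot,\zeta_i\rangle_{\zeta_0}$ is bounded, with $\sup_z\langle z,\zeta_i\rangle_{\zeta_0}=\langle\zeta_i,\zeta_i\rangle_{\zeta_0}=\rho(\zeta_i,\zeta_0)<\infty$. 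This misconception is what pushes you into the much more elaborate (and not carried to completion) direction-by-direction telescoping argument with surplus multiplicities, which is not needed here.

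The paper's proof is considerably shorter and runs term-by-term: $g_1$ is constant off the tree spanned by $\zeta_0$ and the $\zeta_i$, each term $\langle\cdot,\zeta_i\rangle_{\zeta_0}$ attains its supremum $\rho(\zeta_i,\zeta_0)$ at $z=\zeta_i$, and the cited bound (\cite{KJ2} Lemma~1, essentially a Lipschitz estimate on how far a preimage of a type II point can lie in the $\rho$-metric) gives $\rho(\zeta_i,\zeta_0)\leq\log_v\mathcal{L}_{\phi^\gamma}$; averaging the $d$ terms with weight $\tfrac{1}{d}$ gives the claim. Your opening conjugation step $g_1^\phi(z,\zeta_0)=g_1^{\phi^\gamma}(\gamma^{-1}(z),\zetaG)$ is correct and is a perfectly reasonable way to frame the reduction, but after that the argument should just bound each summand using the $\rho$-distance estimate rather than trying to control the aggregate rise via multiplicity counts. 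Your informal remark that the traversable length is bounded ``by how far $\phi$ can stretch a ball'' is pointing at the right estimate --- it is precisely the content of the cited Lipschitz lemma --- but as written it is a heuristic, and the surplus-multiplicity scaffolding around it is both unnecessary and not actually completed in your sketch.
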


\begin{proof}
Note that as a function of $z$, the potential $g_1(z, \zeta_0)$ is constant off of the tree spanned by $\zeta_0$ and the points $\zeta_i$. So it will suffice to determine $\sup_{z\in \pberk} |\langle z, \zeta_i\rangle_{\zeta_0}|$ for each $\zeta_i\in \phi^{-1}(\zeta_0)$; this supremum occurs when $z=\zeta_i$, and the upper bound in \cite{KJ2} Lemma~1 gives that $|g_1(z, \zeta_0)| \leq \log_v \mathcal{L}_{\phi^\gamma}$ as asserted.

\end{proof}

We are now ready to prove Proposition~\ref{prop:wildintegrallowerbound}:

\begin{proof}[Proof of Proposition~\ref{prop:wildintegrallowerbound}]

Recall (\cite{FRLErgodic}, Proposition-D\'efinition 3.1) that for any fixed $\zeta_0\in \hberk$, the measures $\nu_n$ satisfy $$ \nu_n = [\zeta_0] + \Delta g_n(\cdot, \zeta_0)\ ,$$ where $\Delta$ is the Laplacian on $\pberk$ and $$g_n(\cdot, \zeta_0) = \sum_{k=0}^{n-1} \frac{g_1(\phi^k(\cdot), \zeta_0)}{d^k}\ ,$$ and $g_1(\cdot, \zeta_0)$ is the potential function given in Lemma~\ref{lem:lipg}.

Fix $\zeta_0 = \zetaG$, and for $n\geq 0$ define
\[
I_n = \int_{\pberk} \log_v \diam_\infty(\zeta) d(\nu_{n-1} - \nu_n)(\zeta)\ .
\] Let $\Gamma_n$ be the tree spanned by the points of $\phi^{-n}(\zetaG) \cup \phi^{-(n-1)}(\zetaG)$; note that $\Gamma_n$ contains the support of $\nu_{n-1} - \nu_n$. We first show that our integral can be restricted to the tree $\Gamma_n$.

The function $\log_v \diam_\infty(\zeta)$ is not continuous, but its retraction $\log_v \diam_\infty(r_{\Gamma_n}(\zeta))$ is continuous and of bounded differential variation on $\pberk$ and constant on branches off of $\Gamma_n$ (here $r_{\Gamma_n}$ is the retraction map defined in Section~\ref{sect:tree} above). Since the measure $\nu_{n-1} - \nu_n$ has finite support in $\Gamma$, we find 
\begin{equation*}
I_n = \int\log_v \diam_\infty(r_{\Gamma_n}(\zeta)) d(\nu_{n-1} - \nu_n )(\zeta)\ .
\end{equation*}The difference of measures appearing above can be written as $\nu_{n-1} - \nu_n = \Delta(g_{n-1}(\cdot, \zetaG) - g_n(\cdot, \zetaG))$. Applying the definition of the Berkovich Laplacian as a limit of coherent measures (see \cite{BR} Proposition 5.10), we find
\begin{equation*}
\int \log_v \diam_\infty(r_{\Gamma_n}(\zeta)) d\Delta(g_{n-1} - g_n)(\zeta) = \lim_{\substack{\longrightarrow\\ \Gamma}} \int_\Gamma \log_v \diam_\infty(r_{\Gamma_n}(\zeta))\Delta_{\Gamma}(g_{n-1}-g_n)(\zeta)\ .
\end{equation*}

Since the measure $\Delta(g_{n-1}-g_n) = \nu_{n-1} - \nu_n$ has finite support which is contained in $\Gamma_n$, the limit stabilizes at $\Gamma=\Gamma_n$, and we recover 

\begin{equation}\label{eq:restrictedtotree}
I_n = \int_{\Gamma_n} \log_v \diam_\infty(\zeta) d\Delta_{\Gamma_n} (g_{n-1} - g_n)(\zeta)\ .
\end{equation}

Using the expression for $g_n(\cdot, \zetaG)$ as a finite sum, $\Delta_{\Gamma_n}(g_{n-1}-g_n)$ can be written 
\begin{equation*}
\Delta_{\Gamma_n}(g_{n-1} - g_n) = -\Delta_{\Gamma_n} \frac{g_1(\phi^{n-1}(\cdot, \zetaG))}{d^{n-1}}\ .
\end{equation*} Inserting this into (\ref{eq:restrictedtotree}) and interchanging the Laplacian, we find 
\begin{equation}\label{eq:intlogdiamswappedLap}
I_n = -\int_{\Gamma_{n}} \frac{g_1(\phi^{n-1}(\zeta), \zetaG)}{d^{n-1}} d\Delta_{\Gamma_{n}} \log_v \diam_{\infty}(\cdot)\ .
\end{equation} 
Recall that the diameter can be expressed in terms of the Hsia kernel by $\diam_\infty(\zeta) = \delta(\zeta, \zeta)_{\infty}$. In \cite{KJ1} Lemma~6 it was shown that 
\begin{equation}\label{eq:Deltalogdiam}
\Delta_\Gamma \log_v \delta(\cdot, \cdot)_{\infty} =  -2\mu_{\textrm{Br}, \Gamma} + 2[r_\Gamma(\infty)]\ ,
\end{equation} where $\mu_{\textrm{Br}, \Gamma} = \frac{1}{2}\sum_{P\in \Gamma} (2-v_\Gamma(P))[P]$ is the branching measure of the finite graph $\Gamma$ and $r_\Gamma(\infty)$ is the retraction of $\infty$ to $\Gamma$. The Jordan decomposition for the branching measure can be given explicitly as $$\mu_{\textrm{Br}, \Gamma} = \mu_{\textrm{Br}, \Gamma}^+ + \mu_{\textrm{Br}, \Gamma}^-\ ,$$ where $$\mu_{\textrm{Br}, \Gamma}^+ = \frac{1}{2}\sum_{v(P) =1 } [P]$$ is a positive measure supported on the endpoints of $\Gamma$, and $$\mu_{\textrm{Br}, \Gamma}^- = \frac{1}{2} \sum_{v(P) >1} (2-v(P))[P]$$ is either the zero measure, or is a negative measure supported on the interior branch points of $\Gamma$. The measure $\mu_{\textrm{Br}, \Gamma}$ is a probability measure, and we record for later use that
\begin{equation}\label{eq:branchingmeasuredecomp}
-\mu_{\textrm{Br}, \Gamma}^-(\Gamma) = \mu_{\textrm{Br}, \Gamma}^+(\Gamma) - 1\ .
\end{equation}

Inserting (\ref{eq:Deltalogdiam}) into (\ref{eq:intlogdiamswappedLap}) and using the Jordan decomposition for the measure $\mu_{\textrm{Br}, \Gamma_n}$ gives 
\begin{align}
I_n &= \int_{\Gamma_n}\frac{-g_1(\phi^{n-1}(\zeta), \zetaG)}{d^{n-1}} d(-2\mu_{\textrm{Br}, \Gamma_n}^+ - 2 \mu_{\textrm{Br}, \Gamma_n}^-)(\zeta)-2\frac{g_1(\phi^{n-1}(r_{\Gamma_n}(\infty)), \zetaG)}{d^{n-1}} \label{eq:integralwithJordandecomp} \ .
\end{align}  By Lemma~\ref{lem:lipg} and the fact that $g_1(\cdot, \zetaG) \geq 0$, we find that $0\leq g_1(\cdot, \zetaG) \leq \log_v \mathcal{L}_\phi$; hence the integrand in (\ref{eq:integralwithJordandecomp}) is negative. Since $\mu_{\textrm{Br}, \Gamma_n}^+$ is a positive measure, we obtain a lower bound in (\ref{eq:integralwithJordandecomp}) by removing its contribution from the expression, i.e.
\begin{equation*}
I_n \geq \int_{\Gamma_n} \frac{-g_1(\phi^{n-1}(\zeta), \zetaG)}{d^{n-1}}d(-2\mu_{\textrm{Br}, \Gamma_n}^-) - 2\frac{g_1(\phi^{n-1}(r_{\Gamma_n}(\infty)), \zetaG)}{d^{n-1}}\ .
\end{equation*} 
Applying the bound $-g_1(\cdot, \zetaG) \geq -\log_v \mathcal{L}_\phi$, this becomes
\begin{align*}
I_n&\geq \frac{-\log_v \mathcal{L}_\phi}{d^{n-1}} \left( \int_{\Gamma_n} d(-2\mu_{\textrm{Br}, \Gamma_n}^-)(\zeta) + 2\right)\\
& = -\frac{\log_v \mathcal{L}_\phi}{d^{n-1}} \left(-2\mu_{\textrm{Br}, \Gamma_n}^- (\Gamma_n) +2\right)\ .
\end{align*}
Inserting the expression (\ref{eq:branchingmeasuredecomp}), we find 
\begin{equation}\label{eq:lowerboundmuplus}
I_n\geq -\frac{\log_v \mathcal{L}_\phi}{d^{n-1}} \cdot 2\mu_{\textrm{Br}, \Gamma_n}^+(\Gamma_n)\ .
\end{equation}

We observe that $2\mu_{\textrm{Br}, \Gamma}^+(\Gamma)$ counts the number of endpoints in $\Gamma$; since $\Gamma_n$ is spanned by the points $\phi^{-n}(\zetaG) \cup \phi^{-(n-1)}(\zetaG)$, there are at most $d^n+d^{n-1}$ endpoints. Inserting this bound into (\ref{eq:lowerboundmuplus}) gives $$I_n \geq -\frac{\log_v \mathcal{L}_\phi}{d^{n-1}} (d^n+d^{n-1}) = -(d+1)\log_v \mathcal{L}_\phi$$ which is the desired result.

\end{proof}

The next result is a logarithmic equidistribution theorem that extends the equidistribution of preimages given in \cite{FRLErgodic}; the proof idea is a slight modification of the proof given in \cite{FRLErgodic} Proposition-D\'efinition 3.1.

\begin{prop}\label{prop:quantlogeq}
Let $K$ be a complete, algebraically closed non-Archimedean valued field, and let $\phi\in K(z)$. Fix a type II point $\xi\in \hberk$, and let $\gamma\in \PGL_2(K)$ be such that $\gamma(\zetaG) = \xi$. Denote by $\mathcal{L}_{\phi}$ the Lipschitz constant for the action of $\phi$ on $\PP^1(K)$ in the spherical metric.

Let $\nu_n = \frac{1}{d^n} (\phi^{n})^*[\xi]$, and fix a point $a\in \pberk$. We have $$\left|\int_{\pberk} \log_v \delta(\cdot, a)_{\zetaG} d(\nu_n -\mu_\phi)\right| \leq \frac{2d \log_v \mathcal{L}_{\phi^\gamma}}{d-1} \cdot \frac{1}{d^n}\ .$$
\end{prop}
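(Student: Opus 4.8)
The plan is to realize the difference $\nu_n - \mu_\phi$ as the Laplacian of an explicit potential built from the functions $g_k$, and then to exploit the key integration-by-parts identity for the Laplacian together with the uniform bound on $g_1$ from Lemma~\ref{lem:lipg}. Concretely, write $\nu_n = [\xi] + \Delta g_n(\cdot, \xi)$ and $\mu_\phi = [\xi] + \Delta g_\infty(\cdot, \xi)$, so that $\nu_n - \mu_\phi = \Delta\bigl(g_n(\cdot,\xi) - g_\infty(\cdot,\xi)\bigr)$, where $g_n(\cdot,\xi) - g_\infty(\cdot,\xi) = -\sum_{k\geq n} d^{-k} g_1(\phi^k(\cdot),\xi)$. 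Also note that $\log_v \delta(\cdot, a)_{\zetaG}$ is, up to sign, the potential $-u_{[a]}(\cdot, \zetaG) = -\langle \cdot, a\rangle_{\zetaG}$ (when $a\in\hberk$; when $a\in\PP^1(K)$ one argues by a limiting/retraction argument or directly since the function is bounded on the relevant tree), so it satisfies $\Delta \log_v\delta(\cdot,a)_{\zetaG} = [\zetaG] - [a]$.

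The main step is then a symmetry-of-the-Laplacian computation. Using $\int f\, d\Delta g = \int g\, d\Delta f$ for $f, g \in \BDV$, I would write
\begin{align*}
\int_{\pberk} \log_v\delta(\cdot,a)_{\zetaG}\, d(\nu_n - \mu_\phi) &= \int_{\pberk} \log_v\delta(\cdot,a)_{\zetaG}\, d\Delta\bigl(g_n - g_\infty\bigr) \\
&= \int_{\pberk} \bigl(g_n(\cdot,\xi) - g_\infty(\cdot,\xi)\bigr)\, d\Delta\log_v\delta(\cdot,a)_{\zetaG} \\
&= \int_{\pberk} \bigl(g_n(\cdot,\xi) - g_\infty(\cdot,\xi)\bigr)\, d\bigl([\zetaG] - [a]\bigr)\ .
\end{align*}
This reduces the integral to the evaluation $\bigl(g_n - g_\infty\bigr)(\zetaG) - \bigl(g_n - g_\infty\bigr)(a)$, hence is bounded in absolute value by $2\sup_{z}\bigl|g_n(z,\xi) - g_\infty(z,\xi)\bigr|$. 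Now $\bigl|g_n(z,\xi) - g_\infty(z,\xi)\bigr| \leq \sum_{k\geq n} d^{-k}\, \sup_z |g_1(\phi^k(z),\xi)| \leq \sum_{k\geq n} d^{-k}\log_v\mathcal{L}_{\phi^\gamma}$ by Lemma~\ref{lem:lipg}; summing the geometric series $\sum_{k\geq n} d^{-k} = \frac{d}{d-1}\cdot\frac{1}{d^n}$ gives the claimed bound $\frac{2d\log_v\mathcal{L}_{\phi^\gamma}}{d-1}\cdot\frac{1}{d^n}$. Crucially, the dependence on $a$ has disappeared because the only place $a$ entered was through $[a]$ in the Laplacian identity, and the potential is uniformly bounded.

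The main obstacle I anticipate is technical bookkeeping around the point $a$ when $a\in\PP^1(K)$: there $\log_v\delta(\cdot,a)_{\zetaG}$ is unbounded and not globally $\BDV$, so the integration-by-parts identity must be justified carefully — either by retracting onto the finite tree $\Gamma$ spanned by $\xi$ and the finitely many preimages $\phi^{-k}(\xi)$ for $k\leq $ some cutoff (where all the relevant measures are supported), where the retracted function $\log_v\delta(r_\Gamma(\cdot),a)_{\zetaG}$ \emph{is} $\BDV$, or by a direct approximation argument letting $a$ be approached within $\hberk$. I would handle this exactly as in the proof of Proposition~\ref{prop:wildintegrallowerbound}: restrict everything to the finite tree containing $\supp(\nu_n)$, observe that $g_n - g_\infty$ and the retracted log-Hsia-kernel function are continuous and of bounded differential variation there, apply the symmetry identity $\int_\Gamma f\, d\Delta_\Gamma g = \int_\Gamma g\, d\Delta_\Gamma f$, and then pass to the limit over exhausting trees. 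A second minor point is confirming that $g_n - g_\infty \to 0$ uniformly (which follows from the stated uniform convergence $g_n \to g_\infty$ in \cite{FRLErgodic}, combined with the explicit geometric tail estimate above), so that all manipulations are legitimate.
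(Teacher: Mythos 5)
Your argument is essentially the paper's: both realize $\nu_n - \mu_\phi$ as $\Delta\bigl(g_n(\cdot,\xi) - g_\infty(\cdot,\xi)\bigr)$, apply the symmetry of the Laplacian to reduce the integral to the evaluation $(g_n - g_\infty)(\zetaG) - (g_n - g_\infty)(a)$, and bound the geometric tail via $\sup|g_1(\cdot,\xi)| \leq \log_v \mathcal{L}_{\phi^\gamma}$ from Lemma~\ref{lem:lipg}. The paper handles the type I subtlety at $a$ precisely by your second suggestion: it replaces $a$ by the point $a^\epsilon\in\hberk$ on $[a,\zetaG]$ with $\diamG(a^\epsilon)=\epsilon$, proves the bound with $a^\epsilon$, and then lets $\epsilon\to 0$ using the continuity of the potential $u_{\nu_n-\mu_\phi}(\cdot,\zetaG)$. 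Your first alternative (restricting to the finite tree spanned by $\supp(\nu_n)$ and applying $\int_\Gamma f\,d\Delta_\Gamma g = \int_\Gamma g\,d\Delta_\Gamma f$ there) does not carry over as directly as in Proposition~\ref{prop:wildintegrallowerbound}, since here the measure involves $\mu_\phi$, which is not supported on any finite tree; the approximation-from-$\hberk$ route is the clean fix and is what the paper uses.
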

\begin{proof}

Fix $\epsilon \in (0, 1)$; if $\diamG(a) \geq \epsilon$ let $a^\epsilon = a$; otherwise, let $a^\epsilon$ be the unique point on the segment $[a^\epsilon, \zetaG]$ with $\diamG(a^\epsilon) = \epsilon$. 

We will draw from the proof of Proposition-D\'efinition 3.1 of \cite{FRLErgodic}. Fix a type II point $\xi\in \hberk$, and let $g_1$ be the potential for the measure $\nu_1=\frac{1}{d} \phi^*[\xi] - [\xi]$ given in Lemma~\ref{lem:lipg}, so that $\Delta g_1 = \frac{1}{d} \phi^*[\xi]-[\xi] $. Let $$g_n(\cdot, \xi) = \sum_{k=0}^{n-1} \frac{g_1(\phi^k(\cdot), \xi)}{d^k}\ ,$$ whereby $\Delta g_n(\cdot,\xi) = \frac{1}{d^n} (\phi^{n})^* [\xi]- [\xi]= \nu_n-[\xi]$. Since $g_1$ is bounded on $\pberk$,  the functions $g_n(\cdot, \xi)$ converge uniformly to a function $$g_\infty(\cdot, \xi)= \sum_{k=0}^\infty \frac{g_1(\phi^k(\cdot), \xi)}{d^k}\ ,$$ and the measures $\Delta g_n(\cdot, \xi)$ converge weakly to the measure $\mu_\phi-[\xi]$. In particular, $g_n(\cdot, \xi) - g_\infty(\cdot, \xi)$ is a potential for the measure $\nu_n - \mu_\phi$. We remark that $g_n(\cdot, \xi) - g_\infty(\cdot, \xi)$ is continuous on all of $\pberk$.

The integrand $\log_v \delta(\cdot, a^\epsilon)_{\zetaG}$ is continuous on $\pberk$, and therefore we find (see \cite{BR} Corollary 5.39):
\begin{align}
\left| \int \log_v \delta(z, a^\epsilon)_{\zetaG} d(\nu_n - \mu_\phi)\right| & = \left| \int \log_v \delta(z, a^\epsilon)_{\zetaG} d \Delta (g_n(\cdot, \xi) - g_\infty(\cdot, \xi))\right|\nonumber\\
& = \left| \int  g_n(\cdot, \xi) - g_\infty(\cdot, \xi)\  d\Delta \log_v \delta(z, a^\epsilon)_{\zetaG}\right|\nonumber\\
& = \left| \int g_n (\cdot, \xi) - g_\infty(\cdot, \xi)\ d \left([a^\epsilon] - [\zetaG]\right)\right|\nonumber\\
& \leq \left| g_n(a^\epsilon, \xi) - g_\infty(a^\epsilon, \xi)\right| + \left| g_n(\zetaG , \xi) - g_\infty(\zetaG, \xi)\right|\nonumber\\
& \leq 2 \sum_{k=n}^\infty \frac{\sup|g_1(\cdot, \xi)|}{d^k} = \frac{2d\sup |g_1(\cdot, \xi)| }{d-1}\cdot \frac{1}{d^n} \nonumber\ .
\end{align} Let $\gamma\in \PGL_2(K)$ be such that $\gamma(\zetaG) = \xi$. Estimating $\sup |g_1(\cdot, \xi)|$ as in Lemma~\ref{lem:lipg}, we obtain

\begin{equation}\label{eq:boundforsecondintegral}
\left| \int \log_v \delta(z, a^\epsilon)_{\zetaG} d(\nu_n - \mu_\phi)\right| \leq \frac{2d\log_v \mathcal{L}_{\phi^\gamma}}{d-1}\cdot\frac{1}{d^n}\ .
\end{equation} Note that the left side of (\ref{eq:boundforsecondintegral}) is $|u_{\nu_n - \mu_\phi}(a^\epsilon, \zetaG)|$, where $u_{\nu_n - \mu_\phi}(\cdot, \zeta)$ is the potential function for $\nu_n - \mu_\phi$ (see Section~\ref{sect:potentialfunctions}). Since $\nu_n - \mu_\phi$ has continuous potentials, the result follows by letting $\epsilon \to 0$.
\end{proof}

As a corollary, we have
\begin{cor}\label{cor:convergencebracket}
Let $\phi\in K(z)$ have degree $d\geq 2$. Fix a point $\xi\in \hberk$ and let $\nu_n = \frac{1}{d^n}\phi^{n*}[\xi]$. Let $0\neq \psi \in K(z)$ be any rational function. Then $$\int \log_v [\psi]_\zeta d\nu_n(\zeta) \to \int \log_v [\psi]_\zeta d\mu_\phi(\zeta)$$ as $n\to\infty$.
\end{cor}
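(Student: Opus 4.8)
The plan is to reduce the statement to Proposition~\ref{prop:quantlogeq} by decomposing $\log_v[\psi]_\zeta$ into a finite sum of Hsia-kernel potentials. First I would fix a basepoint $\zeta_0 \in \hberk$ — most conveniently $\zeta_0 = \zetaG$ — and write $\psi(z) = c\prod_{i}(z-a_i)/\prod_j(z-b_j)$, where the $a_i$ are the zeros of $\psi$ and the $b_j$ its poles, each listed with multiplicity. Using the identification $\diam_\infty(\zeta \wedge_\infty a) = \delta(\zeta,a)_\infty$ together with the standard formula relating $[\psi]_\zeta$ to the values $\delta(\zeta, a_i)_{\zeta_0}$ and $\delta(\zeta, b_j)_{\zeta_0}$ (this is the usual expression for the seminorm of a rational function in terms of distances to its zeros and poles; see \cite{BR} Section 4), I can write
\[
\log_v[\psi]_\zeta = C + \sum_i \log_v \delta(\zeta, a_i)_{\zetaG} - \sum_j \log_v \delta(\zeta, b_j)_{\zetaG}
\]
for a constant $C$ depending only on $\psi$ (not on $\zeta$). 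This is exactly the decomposition alluded to in Section~\ref{sect:propLyap} when it was observed that $\log_v[\phi']_z \in L^1(\mu_\phi)$.

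Next I would integrate this identity against $\nu_n - \mu_\phi$. Since both $\nu_n$ and $\mu_\phi$ are probability measures, the constant $C$ contributes $0$ to the difference. What remains is a \emph{finite} linear combination (with coefficients $\pm 1$, at most $2d$ terms total) of expressions of the form $\int \log_v \delta(\zeta, a)_{\zetaG}\, d(\nu_n - \mu_\phi)(\zeta)$, one for each zero or pole $a$ of $\psi$. Each such term is controlled by Proposition~\ref{prop:quantlogeq}: fixing a type II point $\xi$ — if $\xi$ is not type II one first approximates, but the statement of the corollary already allows shrinking this concern since the result is qualitative — and choosing $\gamma \in \PGL_2(K)$ with $\gamma(\zetaG) = \xi$, we get
\[
\left|\int \log_v \delta(\zeta, a)_{\zetaG}\, d(\nu_n - \mu_\phi)(\zeta)\right| \leq \frac{2d\log_v \mathcal{L}_{\phi^\gamma}}{d-1}\cdot \frac{1}{d^n}
\]
for \emph{every} point $a \in \pberk$, with a bound independent of $a$. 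Summing over the (at most $2d$) zeros and poles and applying the triangle inequality yields
\[
\left|\int \log_v[\psi]_\zeta\, d(\nu_n - \mu_\phi)(\zeta)\right| \leq \frac{4d^2 \log_v\mathcal{L}_{\phi^\gamma}}{d-1}\cdot \frac{1}{d^n} \xrightarrow{n\to\infty} 0,
\]
which is the claim.

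The main obstacle is the bookkeeping in the first step: one must be careful that the decomposition of $\log_v[\psi]_\zeta$ into Hsia-kernel terms is valid for \emph{all} $\zeta \in \hberk$ (and indeed, after the fact, on the support of each $\nu_n$, which consists of type I or type II points that are preimages of $\xi$), including handling the pole at $\infty$ if $\deg(\text{numerator}) \neq \deg(\text{denominator})$ — this just shifts the constant $C$ and possibly the multiset of $a_i, b_j$ by including $\infty$, which is harmless since Proposition~\ref{prop:quantlogeq} allows $a \in \pberk$ arbitrary. A secondary point is that Proposition~\ref{prop:quantlogeq} as stated requires $\xi$ to be type II; for the corollary's general $\xi \in \hberk$ one invokes the independence of $\mu_\phi$ from the choice of basepoint together with the weak convergence $\nu_n \to \mu_\phi$ on continuous test functions and a limiting argument on the continuous truncations $\log_v\delta(\cdot, a^\epsilon)_{\zetaG}$, exactly as in the proof of Proposition~\ref{prop:quantlogeq}. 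Neither of these is a serious difficulty; the corollary is essentially a repackaging of Proposition~\ref{prop:quantlogeq}.
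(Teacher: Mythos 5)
Your proposal is correct and takes essentially the same approach as the paper: decompose $\log_v[\psi]_\zeta$ via \cite{BR} Corollary 4.14 as a constant plus a finite $\mathbb{Z}$-linear combination of $\log_v\delta(\cdot,a_i)_{\zetaG}$ over the zeros and poles (i.e.\ the divisor) of $\psi$, integrate against $\nu_n-\mu_\phi$, and apply the logarithmic equidistribution (Theorem~\ref{thm:logeq}/Proposition~\ref{prop:quantlogeq}) termwise. The only cosmetic slip is the count ``at most $2d$ terms'': $\psi$ is an arbitrary rational function, so the number of zeros and poles is governed by $\deg\psi$, not by $d=\deg\phi$; this does not affect the qualitative conclusion.
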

\begin{proof}
Write $\textrm{div}(\psi) = \sum_{i=1}^m n_i (a_i)$ for the divisor of $\psi$, a formal sum of the roots and poles $\{a_i\}$ of $\psi$. By \cite{BR} Corollary 4.14, there exists a constant $C=C(\phi, \zetaG)$ such that \begin{equation}\label{eq:psibracketdecomp}[\psi]_\zeta = C\prod_{i=1}^m \delta(\zeta, a_i)^{n_i}_{\zetaG}\ .\end{equation} Therefore, $$\int \log_v [\psi]_\zeta d\nu_n = \log_v C + \sum_{i=1}^m n_i\int \cdot \log_v \delta(\zeta, a_i)_{\zetaG} d\nu_n(\zeta)\ .$$ The result now follows by applying the convergence in Proposition~\ref{thm:logeq} to each of the integrals appearing in the sum, and reassembling the terms using (\ref{eq:psibracketdecomp}).

\end{proof}

We remark that Okuyama has established a similar result when the integrand is $\log_v [\phi^\#]$, and has used these results to obtain approximations to the Lyapunov exponent; see \cite{YO} and \cite{YO2}.

\subsection{Lower Bound for $L(\phi)$: General Case}\label{sect:generalcase}

We are now in a position to prove the Theorem~\ref{thm:lowerboundonLyap}:

\begin{proof}[Proof of Theorem~\ref{thm:lowerboundonLyap}]
Let $\nu_n= \frac{1}{d^n}(\phi^{n})^* [\zetaG]$ be the measures supported at the $n$-th order preimages of $\zetaG$ weighted according to multiplicity. Integrating (\ref{eq:logphiepsilonbound}) against $\nu_n$ we have
\begin{equation}\label{eq:preequidistribution}
\int \log_v [\phi']_\zeta d\nu_n \geq \kappa + \int \log_v \diam_{\infty}(\phi(\zeta)) d\nu_n- \int \log_v \diam_\infty(\zeta) d\nu_n\ .
\end{equation} Since the measures $\nu_n$ satisfy $\phi_* \nu_n = \nu_{n-1}$, the middle integral can be written as $$\int \log_v \diam_\infty(\phi(\zeta)) d\nu_n = \int \log_v \diam_\infty(\zeta) d\nu_{n-1}\ .$$ 
Hence, (\ref{eq:preequidistribution}) becomes 
\begin{equation}\label{eq:preequidistributionI}
\int \log_v [\phi']_\zeta d\nu_n \geq \kappa + \int \log_v \diam_\infty(\zeta) d(\nu_{n-1}- \nu_n)(\zeta)\ .
\end{equation} 
Proposition~\ref{prop:wildintegrallowerbound} gives
\begin{equation}\label{eq:setlowerbound}\int \log_v [\phi']_\zeta d\nu_n \geq \kappa - (d+1)\log_v \mathcal{L}_\phi\ , \ \forall n\geq 1\ ;\end{equation}passing to a limit, the left side becomes $\int \log_v [\phi']_\zeta d\mu_\phi$, which equals $L_v(\phi)$ by Proposition~\ref{prop:equalLyapunovs}. We have therefore shown
\[
L(\phi) \geq \kappa - (d+1) \log_v \mathcal{L}_\phi\ .
\] Since $L(\phi)$ is invariant under conjugation (Lemma~\ref{lem:coordchangeok}), Theorem~\ref{thm:lowerboundonLyap} follows by taking infima over all conjugates $\phi^\gamma$ for $\gamma\in\PGL_2(K)$.
\end{proof}

\subsection{Good and Separable Reduction}
Finally, we show that in the case that $\phi$ has potential good reduction at $\zeta$ and also separable reduction at $\zeta$, the Lyapunov exponent for $\phi$ is necessarily 0. The proof is entirely independent of the machinery developed above.

\begin{prop}\label{prop:goodredlyapzero}
If $\phi$ has good reduction at $\zeta$, and the reduction at $\zeta$ is also separable, then $$L_v(\phi)=0\ .$$
\end{prop}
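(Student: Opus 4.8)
The plan is to work in a coordinate system where the point $\zeta$ of (potential) good reduction is the Gauss point $\zetaG$; by Lemma~\ref{lem:coordchangeok} and Lemma~\ref{lem:otherLyapcoordinvt} this changes neither $L_v(\phi)$ nor $\hat L_v(\phi)$, and by Proposition~\ref{prop:equalLyapunovs} it suffices to compute $\hat L_v(\phi) = \int_{\pberk} \log_v[\phi']_z\, d\mu_\phi$. Since $\phi$ has good reduction at $\zetaG$, the equilibrium measure is $\mu_\phi = [\zetaG]$ (as recalled in Section 2.4.1), so the integral collapses to the single value $\hat L_v(\phi) = \log_v[\phi']_{\zetaG}$. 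Thus the whole statement reduces to showing that $[\phi']_{\zetaG} = 1$ when $\phi$ has good \emph{and} separable reduction at $\zetaG$.

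To evaluate $[\phi']_{\zetaG}$, I would write $\phi = F/G$ with $F, G \in \mathcal{O}[T]$ coprime, $\max(\|F\|,\|G\|) = 1$ (good reduction means the reduction $\bar\phi = \bar F/\bar G \in k(T)$ still has degree $d$, i.e.\ $\bar F, \bar G$ have no common root and the resultant is a unit). Then $\phi' = (F'G - FG')/G^2$, and since $[\,\cdot\,]_{\zetaG}$ is the Gauss norm (sup over the unit disc, which on polynomials is the max of the absolute values of coefficients, and is multiplicative), one gets $[\phi']_{\zetaG} = [F'G - FG']_{\zetaG}$ because $[G]_{\zetaG} = 1$. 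The reduction of the numerator is $\overline{F'G - FG'} = \bar F'\bar G - \bar F\bar G' = \mathrm{Wr}(\bar F,\bar G)$, the Wronskian of $\bar F, \bar G$. Now separability of the reduction $\bar\phi$ is precisely the condition that this Wronskian is a nonzero element of $k[T]$ (equivalently $\bar\phi' \not\equiv 0$); hence $\overline{F'G - FG'} \neq 0$ in $k[T]$, which forces $[F'G - FG']_{\zetaG} = 1$, so $[\phi']_{\zetaG} = 1$ and $\hat L_v(\phi) = \log_v 1 = 0$.

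There is one subtlety to handle carefully, which I expect to be the main (minor) obstacle: the statement says "good reduction at $\zeta$" without assuming $\zeta = \zetaG$, and one must check that \emph{separable reduction at $\zeta$} transports correctly under the conjugation $\gamma$ bringing $\zeta$ to $\zetaG$ — i.e.\ that the reduction type (separable or not) is intrinsic to the pair $(\phi,\zeta)$ and not an artifact of the choice of coordinate. Since a different choice of $\gamma$ fixing $\zetaG$ differs by an element of $K^\times\PGL_2(\mathcal{O})$, whose reduction lies in $\PGL_2(k)$, the two reductions $\bar\phi$ differ by conjugation by a Möbius transformation over $k$, and separability of a rational function over $k$ is invariant under such conjugation (the Wronskian transforms by a nonzero scalar times a power of the Jacobian of the change of coordinates). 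I would state this invariance as a one-line remark. With that in place, the computation above applies in the normalized coordinate and the proof is complete; alternatively, one can phrase the whole argument invariantly by noting that good reduction at $\zeta$ gives $m_\phi(\zeta) = d$ with no "defect," and separability is equivalent to the absence of wild ramification at $\zeta$, so the directional-multiplicity/imbalance considerations of Section 2.3.5 force $\hat\delta(\phi,\zeta) = 0$ and $[\phi']_\zeta = \diam_\infty(\zeta)^{-1}\cdot[\phi]_\zeta$ — but the direct Wronskian computation in normalized coordinates is cleaner and I would present that.
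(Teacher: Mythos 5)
Your argument is correct, and while it reaches the Gauss point the same way the paper does (conjugate $\zeta$ to $\zetaG$ using the invariance lemmas, note $\mu_\phi=[\zetaG]$ under good reduction, and reduce to showing $[\phi']_{\zetaG}=1$), the closing step is genuinely more direct. The paper's route is to invoke Faber's Lemma 6.3 for $\widetilde{\phi'}\neq 0$, then prove in a separate ad hoc argument that $\widetilde{\phi'}$ is \emph{not constant} (ruling out $\widetilde{\phi'}\equiv C\neq 0$ and $\widetilde{\phi'}\equiv\infty$), and only then conclude that $\phi'$ fixes $\zetaG$ so $[\phi']_{\zetaG}=[T]_{\phi'(\zetaG)}=[T]_{\zetaG}=1$. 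You instead evaluate the Gauss norm directly: by multiplicativity of $[\cdot]_{\zetaG}$ on $K(T)$ and the normalizations $[G]_{\zetaG}=[F]_{\zetaG}=1$ forced by good reduction, $[\phi']_{\zetaG}=[F'G-FG']_{\zetaG}$, and this equals $1$ if and only if $\overline{F'G-FG'}=\mathrm{Wr}(\bar F,\bar G)\neq 0$, which is exactly separability of $\bar\phi$. This bypasses the nonconstancy discussion entirely (the constancy of $\widetilde{\phi'}$ is irrelevant to the value of the Gauss norm; only its nonvanishing matters), and it is self-contained where the paper cites Faber. Your parenthetical point that separability of the reduction must be checked to be intrinsic to $(\phi,\zeta)$ — i.e.\ invariant under postcomposing $\gamma$ with $K^\times\PGL_2(\mathcal{O})$ — is a genuine and worthwhile observation that the paper leaves implicit.

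Two small clarifications worth making if you were to write this out in full: state explicitly that good reduction of the normalized pair $(F,G)$ forces \emph{both} $\bar F$ and $\bar G$ to be nonzero in $k[T]$ (so that $[G]_{\zetaG}=1$, not merely $\leq 1$); and be explicit that multiplicativity of the Gauss norm means $[\phi']_{\zetaG}=[F'G-FG']_{\zetaG}/[G]_{\zetaG}^2$ is unaffected by any common factor between $F'G-FG'$ and $G^2$, so you need not reduce $\phi'$ to lowest terms first. The proposed "invariant" alternative via $\hat{\delta}(\phi,\zeta)=0$ is plausible but would need its own justification (it is not asserted anywhere in the paper that good separable reduction forces vanishing distortion); you are right to prefer the Wronskian computation.
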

\begin{proof}
We may first conjugate $\phi$ so that it has good and separable reduction at $\zetaG$. By Lemma~\ref{lem:coordchangeok}, this is leaves $L_v(\phi)$ unaffected. Having done this, $\mu_{\phi} = [\zetaG]$, and $L_v(\phi) = \log_v [\phi']_{\zetaG}$. Since $\phi$ has separable reduction, it follows from a lemma of Xander Faber (\cite{XF}, Lemma 6.3) that $\widetilde{\phi'} \neq 0$. We further claim that $\widetilde{\phi'}$ is not constant. To see this, write $\phi = \frac{f(z)}{g(z)}$ with $(f,g) = 1$. Since $\phi$ has good reduction, we also have that $(\tilde{f}, \tilde{g}) = 1$. Suppose $\tilde{\phi'} = C$ for some nonzero $C\in k$. Then we find $$\tilde{f'}\tilde{g} - \tilde{g'} \tilde{f} = C\tilde{g}^2\ .$$ Rewriting this gives $$\tilde{g} (\tilde{f'} - C\tilde{g}) = \tilde{g'} \tilde{f}\ .$$ Note that $\tilde{g'}$ necessarily has smaller degree than $\tilde{g}$, and as such we find that some root of $\tilde{g}$ is also a root of $\tilde{f}$, contradicting that $(\tilde{f}, \tilde{g}) = 1$ has good reduction. If $\tilde{\phi'} \equiv \infty$, then necessarily $\tilde{g} \equiv 0$, which is impossible since $\phi$ has good reduction.

Since $\tilde{\phi'}$ is not constant, the map $\phi'$ fixes $\zetaG$, hence $$L_v(\phi) = \log_v [\phi']_{\zetaG} = \log_v [T]_{\phi'(\zetaG)} = \log_v [T]_{\zetaG} = 0\ .$$

\end{proof}

\bibliographystyle{plain}

\begin{thebibliography}{10}

\bibitem{BR}
Matthew Baker and Robert Rumely.
\newblock {\em Potential Theory and Dynamics on the Berkovich Projective Line}.
\newblock AMS, 2010.

\bibitem{BRsmht}
Matthew Baker and Robert Rumely.
\newblock Equidistribution of small points, rational dynamics, and potential
  theory.
\newblock {\em Ann. Inst. Fourier (Grenoble)}, 56(3): 625-688, 2006.

\bibitem{BIJL}
Robert Benedetto, Patrick Ingram, Rafe Jones, and Alon Levy.
\newblock Attracting cycles in p-adic dynamics.
\newblock {\em Duke Mathematical Journal}, 163:2325--2356, 2014.

\bibitem{Ben}
Robert Benedetto.
\newblock Hyperbolic maps in $p$-adic dynamics.
\newblock {\em Ergodic Theory Dynam. Systems}, 21:1--11, 2001.

\bibitem{Ber}
Vladimir Berkovich.
\newblock {\em Spectral Theory and Analytic Geometry over non-Archimedean
  Fields}.
\newblock AMS, 1990.

\bibitem{CL}
Antoine Chambert-Loir.
\newblock Mesures et \'equidistribution sur les espaces de Berkovich.
\newblock {\em J. Reine Angew. Math.}, 595:215--235, 2006.

\bibitem{XF}
Xander Faber.
\newblock {Topology and geometry of the Berkovich ramification locus I}.
\newblock {\em Manuscripta Mathematica}, 142(3):439-474, 2013.

\bibitem{FRL}
Charles Favre and Juan Rivera-Letelier.
\newblock Equidistribution quantitative des points de petite hauteur sur la
  droite projective.
\newblock {\em Mathematische Annalen}, 335(2):311--361, 2006.

\bibitem{FRLErgodic}
Charles Favre and Juan Rivera-Letelier.
\newblock {Th\'eorie ergodique des fractions rationelles sur un corps
  ultram\'etrique}.
\newblock {\em Proc. Lond. Math. Soc.}, 1:116--154, 2010.

\bibitem{Fol}
Gerald Folland.
\newblock {\em Real Analysis}.
\newblock Wiley-Interscience, 1999.

\bibitem{KJ1}
Kenneth Jacobs.
\newblock An equidistribution result for dynamical systems on $\pberk$.
\newblock Preprint available at {\em arxiv.org:1409.4808}, 2014.

\bibitem{KJ2}
Kenneth Jacobs.
\newblock Quantitative logarithmic equidistribution in non-Archimedean
  dynamics.
\newblock Preprint available at {\em http://arxiv.org/abs/1507.03460}, 2015.

\bibitem{Ly}
Misha Lyubich.
\newblock Entropy properties of rational endomorphisms of the Riemann sphere.
\newblock {\em Ergodic Theory Dynamical Systems}, 3:351--385, 1983.

\bibitem{MSS}
Ricardo Ma\~n\'e, Paulo Sad, and Dennis Sullivan.
\newblock On the dynamics of rational maps.
\newblock {\em Ann. Sci. Ec. Norm. Sup.}, 16:193--217, 1983.

\bibitem{YO}
Y\^usuke Okuyama.
\newblock Repelling periodic points and logarithmic equidistribution in
  non-Archimedean dynamics.
\newblock {\em Acta Arith.}, 152:267--277, 2012.

\bibitem{YO2}
Y\^usuke Okuyama.
\newblock Quantitative approximations of the Lyapunov exponent of a rational
  function over valued fields.
\newblock {\em Mathematische Zeitschrift}, 280(3-4):691--706, 2015.

\bibitem{Ru}
David Ruelle.
\newblock An inequality for the entropy of differentiable maps.
\newblock {\em Bol. Soc. Brasil. Mat.}, 9:83--87, 1978.

\bibitem{Ru1}
Robert Rumely.
\newblock The minimal resultant locus.
\newblock {\em Acta Arithmetica}, 169 (3): 251-290. 2015.

\bibitem{RW}
Robert Rumely and Stephen Winburn.
\newblock {The Lipschitz constant of a non-Archimedean rational function}.
\newblock In preparation.

\bibitem{Thu}
Amaury Thuillier.
\newblock {\em {Potential theory on curves in non-Archimedean geometry.
  Applications to Arakelov theory.}}
\newblock Th\`ese, {Universit{\'e} Rennes 1}, October 2005.
\newblock Laurent Moret-Bailly (Pr{\'e}sident) Jean-Beno{\^i}t Bost
  (Rapporteur) Robert Rumely (Rapporteur) Antoine Chambert-Loir (Directeur de
  th{\`e}se) Antoine Ducros (Examinateur) Charles Favre (Examinateur).

\end{thebibliography}

\end{document}